\newtheorem{theorem}{Theorem}
\newtheorem{corollary}[theorem]{Corollary}
\newtheorem{conjecture}[theorem]{Conjecture}
\newtheorem{lemma}[theorem]{Lemma}
\theoremstyle{definition}
\newtheorem{definition}[theorem]{Definition}
\newtheorem{example}[theorem]{Example}
\newtheorem{problem}[theorem]{Problem}
\theoremstyle{remark}
\newtheorem{remark}[theorem]{Remark}
\newcommand{\M}{\mathsf{M}}
\newcommand{\N}{\mathsf{N}}
\newcommand{\B}{\mathsf{B}}
\newcommand\doi[1]{\href{http://dx.doi.org/#1}{\texttt{doi:#1}}}
\tikzstyle{lattice} = [draw=red, fill=red]
\tikzstyle{valid} = [draw=red, thick, fill=white]
\tikzstyle{intersect} = [draw=orange, fill=orange]
\tikzstyle{boundary} = [draw=blue, fill=blue]
\tikzstyle{triangle} = [draw=black, thick, fill=blue!20]
\tikzstyle{inequality} = [draw=green, thick]
\tikzstyle{someline} = [draw=black, dashed]
\title[Transversal matroids and the half plane property]{Transversal matroids and the half plane property}
\author{Ayush Kumar Tewari}
\address[A.~K.~Tewari]{
}
\email{tropical.tewari@gmail.com}
\subjclass{05B35; 05A20, 05A15, 94C05}
\keywords{transversal matroids, half-plane property, Rayleigh matroids}
\thanks{I am extremely grateful to 
Adrien Kassel for discussing and explaining his work on determinantal probability measures on matroids. I am also grateful to Joseph Bonin for going through an earlier draft of this work and for also discussing his work on excluded minor characterisation of positroids. I am also thankful to Mario Kummer for providing valuable feedback on this work. }
\pgfplotsset{compat=1.18}
\begin{document}

\begin{abstract}
We focus on checking the validity of the half-plane property on two prominent classes of transversal matroids, namely lattice path matroids and bicircular matroids. We show that lattice path matroids satisfy the half-plane property. Subsequently, we show an explicit example of a bicircular matroid that is not a positroid  and discuss the negative correlation properties of bases of transversal matroids. We prove that sparse paving matroids do not satisfy the Rayleigh property, which helps us gain new perspectives about conjectures on negative correlation in basis elements of matroids in general.
\end{abstract}

\maketitle

{
\hypersetup{linkcolor=blue}
\tableofcontents
}

\section{Introduction}

The class of transversal matroids introduced in \cite{edmonds1965transversals} is one of the most prominent classes of matroids. Since then several subclasses of transversal matroids have been studied in detail owing to their various applications. In this article, our main focus is on two prominent subclasses of transversal matroids: lattice path matroids \cite{bonin2003lattice} and bicircular matroids \cite{pereira1972subgraphs}. The central theme of this article is to study the geometry of the basis-generating polynomials of these two subclasses of transversal matroids. We discuss the broader context in which this brings in concepts from the geometry of polynomials and with recent work on Lorentzian polynomials \cite{branden2020lorentzian}, this seems intricately related to various other classes of matroids. 

One of the first studies regarding the geometry of basis-generating polynomial of matroids with inspirations from combinatorics associated with electrical networks was carried out in \cite{choe2004homogeneous} where the authors established various results concerning the \emph{half-plane property}\footnote{In literature, this property is also referred to as the strong half-plane property to distinguish with the weak half-plane property.} and they provide examples of classes of matroids that satisfy this property, which includes regular matroids, uniform matroids and the sixth-root of unity matroids. They also introduce the definition of \emph{nice} transversal matroids, and show that nice transversal matroids satisfy the half-plane property. The interest in the class of transversal matroids stems from the fact that they do satisfy a mild version of the half-plane property, namely the \emph{weak half-plane property} \cite[Corollary 10.3]{choe2004homogeneous} and the authors envisaged that it might be the case that all transversal matroids might satisfy the half-plane property \cite[Question 13.17]{choe2004homogeneous}. However, with subsequent works \cite{choe2006rayleigh, branden2007polynomials}, we have the hindsight to say that this is not the case, although the question remains open for subclasses of transversal matroids. Also, in \cite{branden2007polynomials, borcea2009negative} the authors highlight the natural way in which the half-plane property is linked with other geometric properties of the basis-generating polynomial for matroids, namely the \emph{Rayleigh} and \emph{strongly Rayleigh} properties. In this direction, one of our first results shows that the class of lattice path matroids satisfies the half-plane property.

\begin{theorem}\label{thm:thm1}
Lattice path matroids satisfy the half-plane property.    
\end{theorem}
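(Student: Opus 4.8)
The plan is to prove that the basis-generating polynomial
\[
 f_M(\mathbf x)=\sum_{B\in\mathcal{B}(M)}\ \prod_{i\in B}x_i
\]
of a lattice path matroid $M$ has the half-plane property, equivalently --- after the coordinate change $x_i\mapsto\mathrm{i}x_i$ --- that $f_M(\mathrm{i}x_1,\dots,\mathrm{i}x_n)$ is a stable polynomial. First I would put $f_M$ in combinatorial normal form, using that a rank-$r$ lattice path matroid on $[n]$ has a presentation by nested intervals $A_i=[\ell_i,u_i]$ with $\ell_1\le\cdots\le\ell_r$ and $u_1\le\cdots\le u_r$, and that $B=\{b_1<\cdots<b_r\}$ is a basis exactly when $\ell_i\le b_i\le u_i$ for every $i$; hence
\[
 f_M(\mathbf x)=\sum_{\substack{b_1<\cdots<b_r\\ \ell_i\le b_i\le u_i}}x_{b_1}x_{b_2}\cdots x_{b_r}.
\]
In the extremal case $M=U_{r,n}$ this is $e_r(x_1,\dots,x_n)$, which is stable because $e_r(\mathbf x)=[t^{\,n-r}]\prod_{j=1}^n(t+x_j)$ and extracting a coefficient in $t$ from the stable polynomial $\prod_j(t+x_j)$ again gives a stable polynomial; the task is to push this through the interval constraints.

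My main route is a transfer-matrix expression. Scanning the steps of a path from left to right with the state recording the number $h$ of North steps used so far yields $f_M(\mathbf x)=e_0^{\top}\bigl(\prod_{j=1}^{n}R_j(x_j)\bigr)e_r$, where $R_j(x_j)=D_j+x_jU_j$ with $D_j$ a $0/1$ diagonal matrix recording the legal East moves (states from which no valid completion remains are killed) and $U_j$ the $0/1$ superdiagonal matrix carrying a $1$ in row $h$ precisely when $j\in A_{h+1}$. The point of the nested-interval structure is that the North step raising the height from $i-1$ to $i$ is forced to lie inside $A_i$, so the right-hand side produces every basis with multiplicity exactly $1$ and is genuinely $f_M$. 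I would then establish stability of this matrix product either (a) by induction on $r$, writing $f_M=\sum_{b=\ell_1}^{u_1}x_b\,g_b(\mathbf x)$ with each $g_b$ again a lattice-path polynomial of rank $r-1$ and proving the requisite interlacing among the $g_b$'s; or (b) by realizing $e_0^{\top}\bigl(\prod_jR_j\bigr)e_r$ as a limit of polynomials of the form $\det\bigl(\sum_jx_jA_j\bigr)$ with Hermitian $A_j\succeq0$, which are stable by the positive-definiteness argument of \cite{choe2004homogeneous}. A parallel line worth trying is to show that lattice path matroids lie in, or are built by half-plane-property-preserving operations (minors, duals, direct sums, series and parallel connection) from, the class of \emph{nice} transversal matroids of \cite{choe2004homogeneous}, which would let one quote their result directly.

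The hard part, and the reason this is not immediate, is the passage from the \emph{weak} to the \emph{strong} half-plane property. Every transversal matroid already satisfies the weak half-plane property \cite[Corollary 10.3]{choe2004homogeneous}: for any complex representation $V$ one has $\det\!\bigl(V\operatorname{diag}(\mathbf x)V^{*}\bigr)=\sum_{B}\lvert\det V_B\rvert^{2}\prod_{i\in B}x_i$, a stable polynomial supported on $\mathcal{B}(M)$, but for a lattice path matroid the weight $\lvert\det V_B\rvert^{2}$ --- essentially a count of weighted systems of distinct representatives --- is in general \emph{not} constant over bases (already for $U_{2,4}$ presented by the intervals $[1,3]$, $[2,4]$ the basis $\{2,3\}$ has two systems of distinct representatives while every other basis has one). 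So one must genuinely exploit the nested structure to manufacture a stable certificate all of whose coefficients equal $1$; checking that the transfer matrices $R_j$ accomplish exactly this, and that the associated operation on polynomials preserves stability, is where the work concentrates.
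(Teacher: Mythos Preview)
Your proposal is a plan, not a proof: the crucial step --- showing that the transfer-matrix product $e_0^{\top}\bigl(\prod_j R_j(x_j)\bigr)e_r$ is stable --- is never carried out. Route~(a) asserts ``the requisite interlacing among the $g_b$'s'' without saying what interlacing relation you mean or why it holds; writing $f_M=\sum_b x_b g_b$ by peeling off the first North step does give lattice-path polynomials $g_b$ of rank $r-1$, but stability of a sum $\sum_b x_b g_b$ requires the $g_b$ to sit in proper position with respect to one another, and you give no argument for this. Route~(b) is more problematic: your $R_j=D_j+x_jU_j$ has $U_j$ strictly superdiagonal, hence not Hermitian, and you are extracting a single matrix entry of a product, not a determinant. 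Passing from that entry to a limit of $\det\bigl(\sum_j x_jA_j\bigr)$ with $A_j\succeq 0$ is exactly the statement that the basis polynomial is (weakly) determinantal, which for lattice path matroids is not known a priori and is essentially as hard as the theorem itself. So both routes stall at the point you yourself flag as ``where the work concentrates''.

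The paper's argument is entirely different and avoids stability computations altogether. It invokes the recursive characterization of lattice path matroids due to Bonin and de~Mier \cite[Theorem~6.7]{bonin2006lattice}: every lattice path matroid is obtained from smaller ones by adding a loop, adding a coloop, or performing a principal extension along the closure of a terminal interval $\{x_h,\dots,x_{i-1}\}$. One then checks that each of these three operations preserves the half-plane property --- loops and coloops are immediate, and for the principal extension step one appeals to \cite[Proposition~4.12]{choe2004homogeneous}, which says that a \emph{nice} principal extension preserves HPP. The whole proof is thus structural induction plus closure of HPP under the relevant constructions; no transfer matrices, no interlacing, no determinantal limits. Interestingly, your throwaway ``parallel line worth trying'' --- building lattice path matroids from nice transversal matroids via HPP-preserving operations --- is precisely the paper's strategy; pursuing that, with the Bonin--de~Mier recursion as the engine, would have landed you on the paper's proof.
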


Another aspect of our work in this article is to show how the classes of bicircular matroids and positroids interact in the superclass of gammoids. We know that lattice path matroids are positroids \cite{oh2011positroids} and with the recent work on \emph{lattice path bicircular matroids} \cite{guzman2022lattice} we know of the existence of a subclass of bicircular matroids that are positroids. We firstly note with the help of an example that the classes of bicircular matroids and positroids do not lie inside each other and this is stated in Corollary \ref{cor:bicirc_and_pos}. The proof rests on the recent work on the classification of the linear order on the ground set of a positroid, also referred to as the \emph{positroid order} in \cite{bonin2023characterization} and we use an excluded minor characterization of positroids to illustrate an example of a rank four bicircular matroid which is not a positroid (Example \ref{eg:bicicrc_not_pos}), which in turn comes from the definition of \emph{base sortable matroids} \cite{blum2001base}, which are now known to be equivalent to positroids \cite{bonin2023characterization}.  Our findings concerning the various classes of matroids are summed up in Figure \ref{fig:venn_diag_matroid}.



Lastly, we consider the class of sparse paving matroids, which is known to dominate in the enumeration of matroids on a log scale and is conjectured to be the dominating class of matroids in the enumeration of matroids in general. By invoking results based on $2-$ sums of Rayleigh matroids and properties of 3-connected matroids, we are able to prove the following result concerning sparse paving matroids.

\begin{corollary}\label{cor:cor1}
The class of sparse paving matroids is not Rayleigh.    
\end{corollary}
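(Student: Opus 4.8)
The plan is to derive Corollary~\ref{cor:cor1} from the existence of a single non-Rayleigh matroid, by exploiting how well the Rayleigh property behaves under the standard matroid constructions, rather than by exhibiting a pathological sparse paving matroid by hand. I will use two facts from the theory of Rayleigh matroids. First, the class of Rayleigh matroids is closed under direct sums and under $2$-sums \cite{choe2006rayleigh}. Second, it is closed under taking minors: deleting an element $e$ sets $x_e=0$ in the basis-generating polynomial, contracting $e$ replaces it by $\partial_{x_e}$ of it, and in either case the defining inequality $\partial_{x_g} f\cdot\partial_{x_h} f \ge f\cdot\partial_{x_g}\partial_{x_h} f$ for all positive $\mathbf x$ is inherited (for deletion by taking limits $x_e\to 0^+$, for contraction by the routine computation). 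Since it is known that not every matroid is Rayleigh \cite{choe2006rayleigh,branden2007polynomials}, I may assume for contradiction that every sparse paving matroid is Rayleigh and then choose a non-Rayleigh matroid $N$ with $|E(N)|$ as small as possible.

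The first step is to show that $N$ must be $3$-connected. Indeed, if $N$ were disconnected it would be a direct sum of matroids on fewer elements, each Rayleigh by minimality, hence $N$ would be Rayleigh; and if $N$ were connected but not $3$-connected, it would be a $2$-sum $N_1\oplus_2 N_2$ with $3\le|E(N_i)|<|E(N)|$, so again each $N_i$, and therefore $N$, would be Rayleigh. The decisive second step is to show that this minimal $3$-connected non-Rayleigh matroid is sparse paving, which contradicts the standing assumption. Here one invokes structural properties of $3$-connected matroids: if $N$ is not paving it contains a circuit $C$ with $|C|<r(N)$ (and dually a small cocircuit if it is not co-paving), and one uses $3$-connectivity to localise $C$ and to rewrite the relevant Rayleigh difference $\partial_{x_e} f_N\cdot\partial_{x_f} f_N - f_N\cdot\partial_{x_e}\partial_{x_f} f_N$ in terms of proper minors and $2$-sum factors of $N$, all of which are Rayleigh by minimality; nonnegativity of the difference then follows, contradicting the choice of $N$. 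Hence $N$ is both paving and co-paving, that is, sparse paving, which is impossible. (Equivalently, one can fix a specific small non-Rayleigh matroid $N_0$ from the literature, pass to its canonical tree decomposition into $3$-connected matroids, circuits, and cocircuits glued by $2$-sums, and note that since circuits and cocircuits are uniform and hence Rayleigh, and $2$-sums preserve the Rayleigh property, at least one $3$-connected piece is non-Rayleigh; one then checks that this piece is sparse paving.)

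The main obstacle is precisely this second step. The closure properties by themselves only locate the obstruction inside the $3$-connected matroids, and showing that it in fact occurs among sparse paving matroids --- that is, ruling out small circuits and cocircuits in a minimal non-Rayleigh matroid --- requires genuinely combining the connectivity structure with a hands-on understanding of how the Rayleigh difference decomposes across a short circuit, and this is the part that does not reduce to formal manipulation. Once it is in place, a concrete witness for Corollary~\ref{cor:cor1} can be displayed by exhibiting one of the small non-Rayleigh matroids recorded in the literature and verifying by inspection of its circuit-hyperplanes that it is sparse paving.
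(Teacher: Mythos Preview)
Your proposal has a genuine gap at the decisive step. You correctly reduce to a minimal non-Rayleigh matroid $N$ and show it must be $3$-connected, using closure of the Rayleigh class under direct sums and $2$-sums. But the claim that such a minimal $3$-connected $N$ must be sparse paving is not established, and the sketch you give is internally inconsistent: you propose to ``rewrite the relevant Rayleigh difference \ldots\ in terms of proper minors and $2$-sum factors of $N$'', yet $N$ is already $3$-connected, so it has no nontrivial $2$-sum decomposition to exploit. There is no known mechanism by which the presence of a short circuit in a $3$-connected matroid forces the Rayleigh difference to decompose into contributions from proper minors, and you do not supply one. Your parenthetical alternative --- pass to the canonical tree decomposition of a specific non-Rayleigh $N_0$, locate a $3$-connected non-Rayleigh piece, and ``check that this piece is sparse paving'' --- simply postpones the same unverified assertion; nothing guarantees that the $3$-connected piece is sparse paving, and you do not name or inspect any concrete $N_0$.

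The paper's argument is entirely different and does not attempt to locate a specific sparse paving witness. It is an asymptotic counting argument: assume every sparse paving matroid is Rayleigh; since sparse paving matroids dominate all matroids on a log scale (Pendavingh--van der Pol), so would the Rayleigh matroids; the paper then uses closure of the Rayleigh class under $2$-sums together with the fact that asymptotically almost all matroids are $3$-connected (Lowrance--Oxley--Semple--Welsh), hence are not nontrivial $2$-sums, to derive a contradiction. Whatever one thinks of the details of that counting step, it is a different strategy from yours, and it does not rely on the structural claim you were unable to prove.
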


Our findings in the form of Corollary \ref{cor:cor1} provide new perspectives for verifying Conjecture \ref{conj:lorent_conj} from \cite{branden2020lorentzian} for the class of matroids, and this result suggests that Rayleigh matroids might not necessarily dominate in the class of all matroids, so it might be worthwhile to search for classes of matroids which are $c-$\emph{Rayleigh}, for $c > 1$, since till now we only know of sporadic examples of such matroids.

\section{Preliminaries}

We refer the reader to \cite{oxley} for basics on matroid theory. A \emph{matroid} of rank $k$ on the ground set $E(\M)$ is a nonempty collection $\M \subseteq \binom{E}{k}$ of $k$-element subsets of $E(\M)$, called \emph{bases} of $\M$ (also referred as the set of independent elements), that satisfies the basis exchange axiom:
\begin{center}
For any $I , J \in \M$ and $a \in I$, there exists $b \in J$ such that $I \setminus \{ a \} \cup \{ b \} \in \M$    
\end{center}

A matroid is called \emph{representable} or \emph{matric} if it can be represented by columns of a matrix over some field $\mathbb{K}$. We index the columns of a $k \times n$ matrix by the set $[n]$. A \emph{positroid} $\mathsf{P}$ of rank $k$ is a  matroid $\M$ that can be represented over $\mathbb{R}$ by a $k \times n$-matrix $A$ such that the maximal minor $p_I$ is non-negative for each $I \in \binom{[n]}{k}$. An important fact about the class of positroids is that it is closed under taking minors and duality.

We also take this opportunity to define the operation of \emph{2-sums} for matroids \cite{oxley}, which appears in our work.

\begin{definition}
Let $\M$ and $\N$ be matroids, each with at least two elements. Let $E(\M) \cap E(\N) = \{p\}$ and suppose that neither $\M$ nor $\N$ has $\{p\}$ as a \emph{separator}. Then the $2-$sum $\M \oplus_{2}  \N$ of $\M$ and $\N$ is $S(\M, \N) / p$ or $P(\M, \N)\backslash p$, where $S(\M, \N)$ and $P(\M, \N)$ represent \emph{series connecion} and \emph{parallel connection} of $\M$ and $\N$ respectively. The element $p$ is called the \emph{basepoint} of the $2-$sum, and $\M$ and $\N$ are called the parts of the $2-$sum.    
\end{definition}

If $\mathcal{C}(\M)$ represents the set of circuits of the matroid $\M$, then the description of the set of circuits of the 2-sum $\M \oplus_{2} \N$ is given as  \cite[Proposition 7.1.20]{oxley},

\[ \mathcal{C}(\M \backslash p) \cup \mathcal{C}(\N \backslash p) \cup \{ (C \cup D) - p :  p \in C \in \mathcal{C}(\M) \> \text{and} \> p \in D \in \mathcal{C}(\N)   \}  \]

There are many different families of matroids, where the elements in the bases correspond to various properties of the underlying structure of the set. One prominent example of such matroids is \emph{cyle matroids} where for an undirected graph $G=(V,E)$, the underlying ground set is the set of edges $E$ and the bases elements of the graphical matroid $\M(G)$ are the spanning forests of $G$. A matroid that is isomorphic to the cycle matroid of a graph is called \emph{graphic}. The class of graphic matroids is minor-closed and is a subclass of \emph{regular} matroids.

Let $E$ be a set (which is going to be the ground set of the matroid), A \emph{set system} $(S, A)$ is a set $S$ along with a multiset $\mathcal{A} = (A_{j} : j \in J)$ of subsets of $S$. A \emph{transversal} of $\mathcal{A}$ is a set $\{ x_{j} : j \in J  \}$ of  $|J|$ distinct elements such that $x_{j} \in  A_{j}$ for all $j \in J$. A \emph{partial transversal} of $\mathcal{A}$ is a transversal of a set system of the form ($A_{k} : k \in K) $ with $K$ a subset of $J$. A \emph{transversal matroid} is a matroid whose independent sets are the partial transversals of some set system $\mathcal{A} = (A_{j} : j \in J)$ and $\mathcal{A}$ is called the \emph{presentation} of the transversal matroid. We denote this matroid by $\M[\mathcal{A}]$. The bases of a transversal matroid are the maximal partial transversals of $\mathcal{A}$ \cite{bonin2010introduction,bonin2003lattice}. The class of transversal matroids is neither closed under taking minors nor under duality. However, if we consider the class of \emph{gammoids} introduced by Hazel \cite{nuij1968note}, then we know that gammoids are the smallest class of matroids that includes the transversal matroids and is closed under duality and taking minors.

Let $G = (V, E)$ be a graph and for $I \subseteq E$ define $G[I]$ to be the edge-induced subgraph of $G$. The collection
\begin{center}
    $\{ I \subseteq E \> | \> \text{each \> component \> of \> } G[I] \> \> \text{has \> atmost \> one \> cycle} \}$
\end{center}

is the collection of independent sets of a matroid on $E$. This matroid is called the \emph{bicircular matroid} of $G$ and is denoted by $\B(G)$. A matroid is bicircular if there exists a graph $G$ such that $\M=\B(G)$. The graph $G$ is a \emph{bicircular representation} of $\M$ \cite{coullard1991representations}.

It is well known that bicircular matroids are transversal and if $\B(G)$ is a bicircular matroid for the graph $G = (V,E)$, then for each $v\in V$, the family of sets

\[ \mathcal{A}_{v} = \{ e \in E \> | \> e \> \text{is incident  with} \> v \} \]

provides the presentation of $\B(G)$ as a transversal matroid \cite[Theorem 3.1]{matthews}. Additionally, bicircular matroids are precisely those transversal matroids that have a presentation such that the intersection of any three
members is empty \cite[Corollary 3.3]{matthews}.

Equivalently, the bicircular matroid $\B(G)$ of $G=(V,E)$, is the matroid on $E$ whose circuits, which are called the \emph{bicycles} of $G$, are the edge set of a subgraph of $G$ which is a subdivision of one of the graphs shown in Figure \ref{fig:bicirc_def} \cite{oxley, coullard1991representations}. The class of bicircular matroids is closed under taking minors but is not closed under duality. 

\begin{figure}
    \centering
    \includegraphics[scale=0.5]{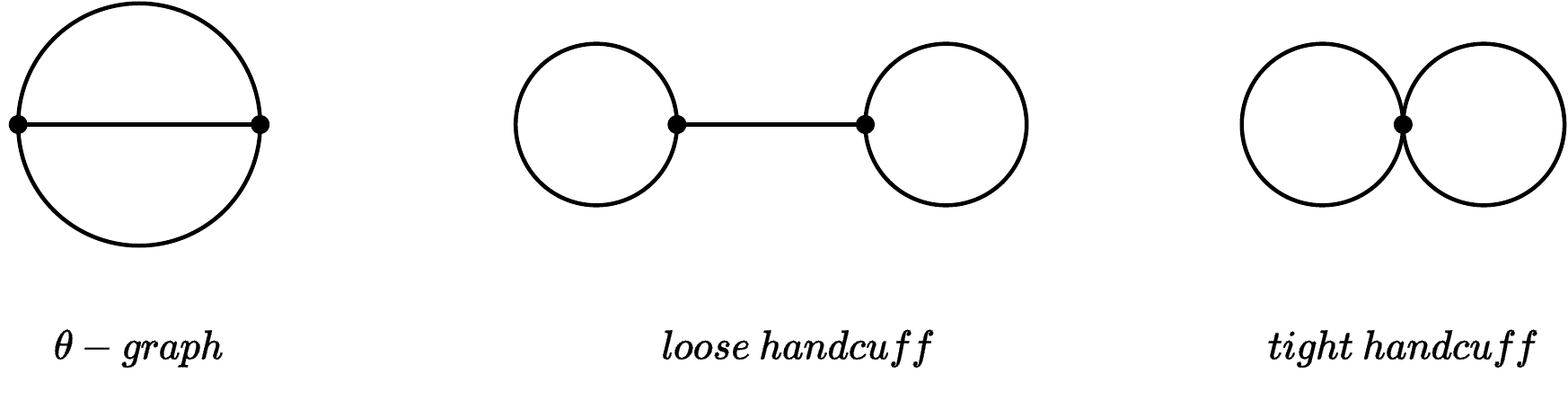}
    \caption{$\theta-$graph, loose handcuﬀ and tight
handcuﬀ graphs.}
    \label{fig:bicirc_def}
\end{figure}

Another prominent class of transversal matroids that show up in our discussion is the class of \emph{lattice path matroids}.

\begin{definition}[Definition 3.1 \cite{bonin2003lattice}]
Let $P = p_{1}, p_{2} \hdots p_{m+r}$ and $Q = q_{1}, q_{2},  \hdots q_{m+r}$ be two lattice paths from
$(0,0)$ to $(m,r)$ with $P$ never going above $Q$. Let $\{p_{u_{1}}, \hdots  p_{u_{r}}\}$  be the set of North steps
of $P$; with $u_{1} < u_{2}  < u_{r}$; similarly, let $\{q_{l_{1}}, \hdots  q_{l_{r}}\}$ be the set of North steps of $Q$;
with $l_{1} < u_{2}  < l_{r}$: Let $N_{i}$ be the interval $[l_{i}, u_{i}]$ of integers. Let $\M[P, Q]$ be the transversal matroid that has ground set $[m + r]$ and presentation $(N_{i}: i \in [r])$; the pair $(P, Q)$ is a \emph{lattice path presentation} of $M[P, Q]$. A lattice path matroid is a matroid $\M$ that is isomorphic to $\M[P, Q]$ for some such pair of lattice paths $P$ and $Q$.    
\end{definition}

An important class of lattice path matroids are the \emph{snakes} or the \emph{border strip matroids} which are defined as follows \cite{knauer2018tutte},

\begin{definition}
We call a lattice path matroid $\M[P,Q]$ a \emph{snake} if it has at least two elements, it is connected and the strip contained between the paths $P$ and $Q$ does not contain any interior lattice point.   
\end{definition}

The class of lattice path matroids is closed both under duality and taking minors. Also, by the work of Oh \cite{oh2011positroids} we know that lattice path matroids are also positroids. Lattice path positroids have been generalized to multi-path matroids \cite{bonin2007multi} which again are transversal matroids, closed under taking minors and duals and are also positroids \cite{bonin2023characterization}. 

\begin{figure}
    \centering
    \includegraphics[scale=0.5]{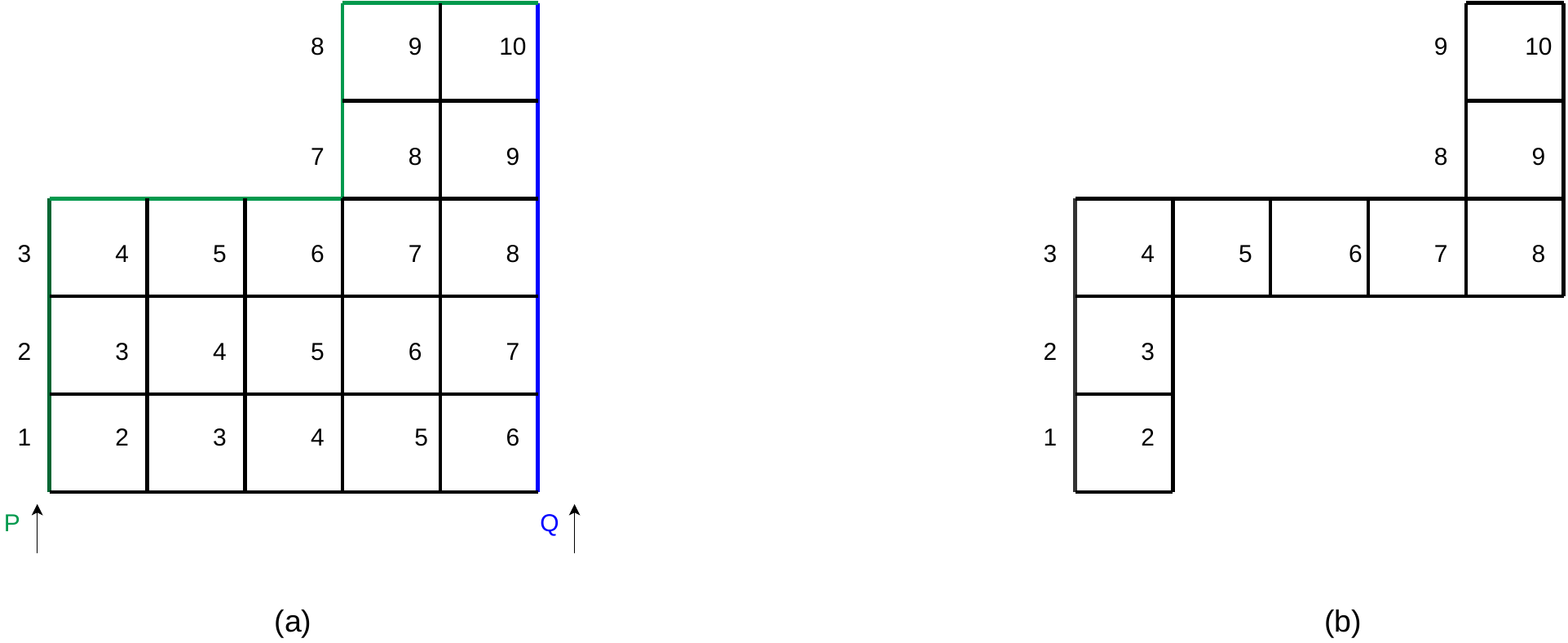}
    \caption{An example of a lattice path matroid (a) $\M = \M[12378,678910]$ and (b) represents an example of a snake.}
    \label{fig:enter-label}
\end{figure}

\section{Negative Dependence and Matroids}

We recall most of our definitions from \cite{borcea2009negative}.
Let $\mathfrak{P}^{n}, n \in \mathbb{N}$ be the set of all probability measures on the Boolean algebra $2^{[n]}$. Let $\mathcal{P}_{n}$ be the set of all multi-aﬃne polynomials in $n$ variables $f(z_{1}, \hdots , z_{n})$ with non-negative coeﬃcients such that $f(\mathbf{1}) = 1,$ where $\mathbf{1} = (1, \hdots , 1) \in \mathbb{R}^{n}$ denotes the
all ones vector. There exists a one-to-one correspondence between elements in $\mathfrak{P}^{n}$ and $\mathcal{P}_{n}$ \cite{borcea2009negative}, which is described as follows: Consider $\mu \in \mathfrak{P}^{n}$, then the generating polynomial for $\mu$ can be written as 

\[ g_{\mu}(\mathbf{z}) = \int \mathbf{z}^{s}d\mu(S) = \sum_{S\subseteq [n]} \mu(S) \mathbf{z}^{s} , \quad \mathbf{z}  = (z_{1}, \hdots,z_{n}), \quad \mathbf{z}^{s} = \prod_{i \in S} z^{i} \]

and if

\[  f(\mathbf{z}) = \sum_{S \subseteq [n]} a_{S} \mathbf{z}^{S} \quad \in \mathcal{P}_{n} \]

then f defines a measure $\mu_{f}$ on $2^{[n]}$ by setting $\mu_{f}(S) = a_{S} , S \subseteq [n]$. It is clear that
$g_{\mu} \in \mathcal{P}_{n}, \mu_{f} \in \mathfrak{P}^{n}, g_{\mu_{f}} = f$, and $\mu_{g_{\nu}} = \nu$ for any $\nu \in \mathfrak{P}^{n}$, $f \in \mathcal{P}_{n}$. 

Let $\mu: 2^ {[n]} \rightarrow \mathbb{R} $ be a function that attains non-negative values on subsets $E \subseteq 2^{[n] }= \{1,2, \hdots, n\}$ satisfying $\sum_{E \subseteq 2^{[n]}} \mu(E) = 1$. Then $\mu$ is said to satisfy the \emph{negative lattice condition} (NLC) if

\[  \mu(S) \mu(T) \geq \mu(S\cup T) \mu(S \cap T)  \]

for all $S,T \subseteq [n]$. The corresponding probability measure defined on $2^{[n]}$ associated with $\mu$ is said to be \emph{negatively associated} if 

\[ \int Fd\mu \int Gd\mu \geq \int FGd\mu  \]




\begin{definition}
A polynomial $f \in \mathcal{P}_{n}$ is called a \emph{Rayleigh polynomial} if 

\[ \frac{ \partial f} {\partial z_{i}} (x) \frac{ \partial f} {\partial z_{j}} (x) \geq \frac{ \partial^{2} f} {\partial z_{i} \partial z_{j}} (x) f(x)  \] 

for all $x = (x_{1}, \hdots , x_{n}) \in \mathbb{R}^{n}_{+}$ and $1 \leq i, j \leq n$, where $\mathbb{R}_{+} = (0, \infty)$.
\end{definition}

More generally, a multi-aﬃne polynomial in $\mathbb{R}[z_{1}, \hdots, z_{n} ]$ with non-negative coeﬃcients is called a \emph{Rayleigh polynomial} if it satisﬁes the above condition. A measure $\mu \in \mathfrak{P}^{n}$ is said to be a \emph{Rayleigh measure} if its generating polynomial $g_{\mu}$ is Rayleigh \cite{borcea2009negative}.

The notion of \emph{Rayleigh} matroids was introduced in \cite{choe2006rayleigh}, wherein a matroid $\M$ is a \emph{Rayleigh} if its basis generating polynomial is a Rayleigh polynomial. A class of matroids is \emph{Rayleigh} or is said to satisfy the \emph{Rayleigh property} if the basis generating polynomials of all the matroids in it are \emph{Rayleigh}. A slightly stronger notion modeled on this property is the definition of \emph{strongly Rayleigh} polynomials,

\begin{definition}
A polynomial $f \in \mathcal{P}_{n}$ is called a \emph{strongly Rayleigh polynomial} if 

\[ \frac{ \partial f} {\partial z_{i}} (x) \frac{ \partial f} {\partial z_{j}} (x) \geq \frac{ \partial^{2} f} {\partial z_{i} \partial z_{j}} (x) f(x)  \] 

for all $x = (x_{1}, \hdots , x_{n}) \in \mathbb{R}^{n}$ and $1 \leq i, j \leq n$.    
\end{definition}

More generally, a multi-aﬃne polynomial in $\mathbb{R}[z_{1}, \hdots, z_{n} ]$ with non-negative coeﬃcients is called a \emph{ strongly Rayleigh polynomial} if it satisﬁes the above condition \cite{borcea2009negative}. It is clear from the definitions above that strongly Rayleigh condition implies the Rayleigh condition. Also, the notion of strongly Rayleigh is extended to matroids and classes of matroids in the same way as done for Rayleigh polynomials. 

We now recall the definition of a \emph{stable} polynomial, 

\begin{definition}
A polynomial $f \in \mathbb{C}[z_{1}, \hdots, z_{n}]$ is called \emph{stable} if $f(z_{1}, \hdots, z_{n}) \neq 0$ whenever $\text{Im} (z_{j}) > 0$ for $1 \leq j \leq n$. A stable polynomial with all real coeﬃcients is called \emph{real stable}.   
\end{definition}

The definition of stable polynomials coincides with the definition of polynomials which satisfy the \emph{half-plane property}, studied in detail in \cite{choe2004homogeneous}. This notion is extended to the class of matroids: a matroid $\M$ is said to satisfy the \emph{(strong) half-plane property} if the basis generating polynomial of $\M$ satisfies the half-plane property and such matroids are also referred as \emph{HPP matroids} \cite{choe2006rayleigh,choe2004homogeneous}. There is also a weaker notion of the half-plane property: a matroid $\M$ is said to satisfy the \emph{weak half-plane property} if there exists a stable polynomial whose support is the set of bases of the matroid $\M$. The matroid that satisfies the half-plane property is also strongly Rayleigh \cite{branden2007polynomials} and hence also Rayleigh.

A measure $\mu \in \mathfrak{P}^{n}$ is said to be a \emph{strongly Rayleigh measure} if its generating polynomial $g_{\mu}$ is real stable \cite{borcea2009negative}. In \cite[Theorem 5.6]{branden2007polynomials} it is shown that a multi-affine polynomial is stable if and only if it is strongly Rayleigh.

We also recall the notion of \emph{balanced matroids}, first introduced in \cite{feder1992balanced}, which uses the negative correlation property of elements in a matroid defined as follows

\begin{definition}
Let $\M$ be a matroid on the ground set $E$ with basis $\mathcal{B}(\M)$. Let $\mathbb{P}(e)$ denote the probability of an element $e$ being present in basis element $B$, which is chosen uniformly at random. The matroid $\M(E, \mathcal{B}(\M))$ is said to satisfy the  \emph{negatively correlated} property if 

\[  \mathbb{P}(ef) \leq \mathbb{P}(e)\mathbb{P}(f) \]

for all pair of distinct $e,f \in E$.
\end{definition}    

As is evident, the above definition is modeled on the definition of negatively associated measures defined previously.

\begin{definition}
A matroid $\M$ is said to be \emph{balanced} if all its minors including itself satisfy the negative correlation property.    
\end{definition}

It is known that Rayleigh matroids are balanced \cite{choe2006rayleigh}  although the converse is not true and this is described by an explicit example in \cite[Theorem 5.12]{choe2006rayleigh}.

We take this opportunity to touch upon other aspects related to the geometry of polynomials, which also have connections to linear programming,

We begin with some basic definitions,

\begin{definition}[Defintinion 2.8 \cite{kummer2023}]\label{def:determ}
A real homogeneous polynomial $h \in \mathbb{R}[x_{1} , \hdots , x_{n}]$ of degree $d$ is said to have a \emph{determinantal representation} if there are positive semi-definite matrices $A_{1} , \hdots, A_{n}$ of size $d \times d$ such that  

\[  h = \text{det}(x_{1}A_{1} + \hdots + x_{n}A_{n})  \]
\end{definition} 

$f$ is said to be \emph{weakly determinantal} if $f^{r}$ has a determinantal representation for some suitable $r \in \mathbb{N}$. A matroid is called \emph{weakly determinantal} if its basis-generating polynomial is weakly determinantal.

It is known by the work in \cite[Section 8]{choe2004homogeneous} that the bases generating polynomials of a matroid $\M$ has a determinantal representation if and only if $\M$ is a regular matroid. 

\begin{definition}
A multiaffine polynomial $h \in \mathbb{R}[x_{1}, \hdots , x_{n}]$ is called \emph{SOS-Rayleigh} if for all $1 \leq i, j \leq n$, the \emph{Rayleigh difference} $\Delta_{ij}(h)$,

\[  \Delta_{ij}(h)  = \frac{ \partial h} {\partial x_{i}} \frac{ \partial h} {\partial x_{j}} - \frac{ \partial^{2} h} {\partial x_{i} \partial x_{j}} \cdot h \]

is a sum of squares (of polynomials). A matroid is called \emph{SOS-Rayleigh} if its bases generating polynomial is SOS-Rayleigh.    
\end{definition}

It is clear from \cite[Theorem 5.6]{branden2007polynomials} that SOS Rayleigh implies the half-plane property. Additionally, weakly deteminantal also implies being SOS-Rayleigh \cite{kummer2023}. A matroid is said to be \emph{SOS-Rayleigh} if the basis generating polynomial of the matroid is SOS-Rayleigh.

We present some background on what motivates us to study the half-plane property in the context of transversal matroids. In \cite{choe2004homogeneous}, the authors explore whether the class of transversal matroids satisfies the half-plane property. They show that all transversal matroids satisfy the weak half-plane property. Subsequently, they also classify a class of transversal matroids that do satisfy the half-plane property and term them as \emph{nice} transversal matroids \cite[Corollary 10.3]{choe2004homogeneous}. With subsequent work we now know that not all transversal matroids satisfy the half-plane property: in \cite[Proposition 5.9]{choe2006rayleigh} and also mentioned with a correction in \cite{huh2021correlation}, the authors provide an explicit example of a rank four transversal matroid which is not Rayleigh and hence also does not satisfy the half-plane property. We discuss the background of this classification of nice transversal matroids.

We discuss the Heilmann-Leib Theorem \cite{heilmann1972theory} in the context of nice transversal matroids. Let $G = (V, E)$ be a loopless graph, then the matching polynomial with
edge weights $\{ \lambda_{e}\}_{e \in E}$ and vertex weights $\{x_{i} \}_{i \in V}$ is \cite{choe2004homogeneous} :

\[ M_{G}(x;\lambda) = \sum_{\text{matchings  M}} \prod_{e=ij \in M} \lambda_{e}x_{i}x_{j}  \]

The complementary matching polynomial is defined as 

\[  \Tilde{M}_{G}(x;\lambda) = x^{V} M_{G}(1/x;\lambda)  \]

These matching polynomials also enjoy recursive relations

\[ M_{G}(x;\lambda) = M_{G \setminus e}(x;\lambda) + \lambda_{e} x_{i}x_{j}M_{G-i-j}(x; \lambda) \]

\[ \Tilde{M}_{G}(x;\lambda) = x_{i}\Tilde{M}_{G - i}(x;\lambda) + \sum_{\substack{e \sim i \\ e=ij}} \lambda_{e}\Tilde{M}_{G - i - j}(x;\lambda) \]

\begin{theorem}[Heilmann-Leib Theorem]
Let $G = (V ,E)$ be a loopless graph, and let $\{\lambda_{e}\}$ $e \in E$ be nonnegative edge weights. If $\text{Re} \> x_{i} > 0$ for all $i \in V$ , then

\begin{enumerate}
    \item $ \Tilde{M}_{G}(x;\lambda) \neq 0 $.
    \item For every $i \in V$, $ \Tilde{M}_{G-i}(x;\lambda) \neq 0 $.
    \item For every $i \in V$, $\text{Re} \> \Tilde{M}_{G}(x;\lambda) / \Tilde{M}_{G-i}(x;\lambda) > 0$.
    \item $M_{G}(x;\lambda) \neq 0$. 
\end{enumerate}
Essentially, if the edge weights for the graph $G$ are nonnegative, then the polynomials $M_{G}$ and $\Tilde{M}_{G}$ have the half-plane property.
\end{theorem}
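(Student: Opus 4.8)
The plan is to establish statements (1), (2) and (3) simultaneously by induction on the number of vertices $|V|$, and then to deduce (4) from (1) by the substitution $x \mapsto 1/x$. For the base case $V = \emptyset$ one has $\Tilde{M}_{G}(x;\lambda) = 1$, so (1) holds while (2) and (3) are vacuous. The whole argument uses only two elementary facts about complex numbers: if $\text{Re}\,w > 0$ then $\text{Re}(1/w) = \text{Re}(w)/|w|^{2} > 0$, and a sum of one number with positive real part and several numbers with nonnegative real part again has positive real part.

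For the inductive step I would assume $|V| \geq 1$ and that the theorem holds for every loopless graph with strictly fewer vertices. Statement (2) for $G$ says $\Tilde{M}_{G-j}(x;\lambda) \neq 0$ for each $j \in V$; this is exactly statement (1) for $G-j$, a graph on $|V|-1$ vertices, hence it is the inductive hypothesis. To obtain (3), fix $i \in V$ and divide the recursion
\[ \Tilde{M}_{G}(x;\lambda) = x_{i}\Tilde{M}_{G-i}(x;\lambda) + \sum_{\substack{e \sim i \\ e=ij}} \lambda_{e}\Tilde{M}_{G-i-j}(x;\lambda) \]
by $\Tilde{M}_{G-i}(x;\lambda)$, which is nonzero by (2), to get
\[ \frac{\Tilde{M}_{G}(x;\lambda)}{\Tilde{M}_{G-i}(x;\lambda)} \;=\; x_{i} \;+\; \sum_{\substack{e \sim i \\ e=ij}} \lambda_{e}\,\frac{\Tilde{M}_{(G-i)-j}(x;\lambda)}{\Tilde{M}_{G-i}(x;\lambda)}. \]
For each edge $e=ij$ incident to $i$, apply the inductive hypothesis (3) to the graph $G-i$ and its vertex $j$: this gives $\text{Re}\bigl(\Tilde{M}_{G-i}/\Tilde{M}_{(G-i)-j}\bigr) > 0$, so by the first elementary fact and $\lambda_{e} \geq 0$ every summand on the right has nonnegative real part. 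Adding $x_{i}$, whose real part is positive by hypothesis, shows that the ratio $\Tilde{M}_{G}/\Tilde{M}_{G-i}$ has positive real part, which is (3). Then (1) for $G$ is immediate: $\Tilde{M}_{G-i} \neq 0$ by (2) and the ratio is nonzero by (3), hence $\Tilde{M}_{G} \neq 0$.

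It remains to handle (4) and the closing remark. From $\Tilde{M}_{G}(x;\lambda) = x^{V}M_{G}(1/x;\lambda)$ one gets $M_{G}(x;\lambda) = x^{V}\,\Tilde{M}_{G}(1/x;\lambda)$, and $\text{Re}\,x_{i} > 0$ forces $\text{Re}(1/x_{i}) = \text{Re}(x_{i})/|x_{i}|^{2} > 0$; hence $\Tilde{M}_{G}(1/x;\lambda) \neq 0$ by (1), and as $x^{V} \neq 0$ we conclude $M_{G}(x;\lambda) \neq 0$, which is (4). The assertion that $M_{G}$ and $\Tilde{M}_{G}$ have the half-plane property is then just the observation that the open right half-plane is an admissible choice of half-plane in the definition; for the homogeneous polynomial $\Tilde{M}_{G}$ this is moreover equivalent to stability in the sense used in the paper, by factoring out a common rotation of all the variables. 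I expect the only genuinely delicate point to be the organization of the triple induction: one must recognise that statement (3) — the positivity of the real part of the ratio $\Tilde{M}_{G}/\Tilde{M}_{G-i}$ — is the quantity actually propagated by the deletion recursion, while (1) and (2) are purely formal consequences of it, and one must verify that each appeal to the inductive hypothesis refers only to graphs with strictly fewer vertices.
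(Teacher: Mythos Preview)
The paper does not supply its own proof of the Heilmann--Leib Theorem: it is quoted as background from \cite{heilmann1972theory} (via \cite{choe2004homogeneous}) and used without argument. Your proposal reproduces the classical inductive proof --- carrying the real-part positivity of the ratio $\Tilde{M}_{G}/\Tilde{M}_{G-i}$ through the vertex-deletion recursion --- and it is correct as written; there is nothing in the paper to compare it against beyond noting that this is exactly the standard argument the cited sources give.
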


As discussed earlier, transversal matroids enjoy an intricate definition involving bipartite graphs, which is discussed in \cite{choe2004homogeneous} using the Heilmann-Leib Theorem, which we recall here. Consider a bipartite graph $G = (V , E)$ with bipartition $V = A \cup B$, setting $x_{j} = 1$ for $j \in B$ and considering $M_{G}(x; \lambda)$ as a polynomial in $\{x_{i}\}$ $i \in A$ . Then the restricted matching polynomial can be stated as 

\[   \overline{M}_{G}(x;\lambda) = \sum_{\text{matchings} \> M}  \> \prod_{\substack{e=ij \in M \\ i \in A \\ j \in B}} \lambda_{e} x_{i} \]

and it also has the half-plane property, provided that the edge weights are nonnegative. Consider now the transversal matroid $\M[G, A]$ with ground set $A$ defined by the bipartite graph $G$, in which a subset $S \subseteq A$ is declared independent if it can be matched
into $B$. Defining the weighted sum of such matchings,

\[ c(S; \lambda) = \sum_{\substack{\text{matchings \> M} \\ V(M) \cap A = S}} \> \prod_{e \in M} \lambda_{e} \]

Hence,

\[   \overline{M}_{G}(x;\lambda) = \sum_{S \in \mathcal{I}(\M[G, A])}  c(S; \lambda) x^{S} \]

So we see that the matching polynomial, which is also a stable polynomial, is almost the basis generating polynomial of $\M[G, A]$ except the coefficients $c(S; \lambda)$, and this motivated the definition of \emph{nice} transversal matroids. We realize that if all the coefficients have the same value, then the stability of the matching polynomials also applies stability of the basis-generating polynomial. The pair $(G, A)$ is called \emph{nice} if there exists a collection $\{\lambda_{e}\}, e \in E$ of nonnegative edge weights so that $c(S; \lambda)$ has the same nonzero value for all bases $S$ of $\M[G, A]$.

\begin{definition}
The transversal matroid $\M$ is called \emph{nice} if there exists a nice pair $(G, A)$ such that $\M \simeq \M[G, A]$.   
\end{definition}

\section{Lattice path matroids satisfy the half-plane property}

We first recall the iterative description of lattice path matroids proven in \cite{bonin2006lattice},

\begin{theorem}[Theorem 6.7 \cite{bonin2006lattice}]\label{thm:recurs_LPM}
A matroid $\M$ is a lattice path matroid if and only if the ground set can be written as $\{x_{1}, x_{2}, \hdots, x_{k} \}$ so that each restriction $\M_{i}:= \M|\{x_{1}, x_{2}, \hdots, x_{i} \}$ is formed from $\M_{i-1}$ by either
\begin{enumerate}
    \item adding $x_{i}$ as a coloop,
    \item adding $x_{i}$ as a loop, or
    \item adding $x_{i}$ via the principal extension of $\M_{i-1}$ generated by the closure of an independent set of the form $\{x_{h}, x_{h+1}, \hdots, x_{i-1} \}$ for some $h$ with $h < i$. 
\end{enumerate}  
\end{theorem}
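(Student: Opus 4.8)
The plan is to prove the two implications separately, in both cases translating between the combinatorics of the pair of boundary paths and the three matroid operations, read off one position at a time. For the forward implication I take $\M=\M[P,Q]$ with nested interval presentation $(N_j:j\in[r])$, $N_j=[l_j,u_j]$, where $l_1\le\cdots\le l_r$ and $u_1\le\cdots\le u_r$, and I order the ground set by the natural order $x_i=i$. The first step is to observe that each initial restriction $\M_i=\M\,|\,[i]$ is again a lattice path matroid: its presentation is the truncation $(N_j\cap[i]:l_j\le i)$, and since truncating the upper endpoints down to $i$ preserves both monotonicities, the truncated intervals are still nested and hence come from a pair of paths. I then classify position $i$ by the interval structure, calling $N_j$ \emph{open at} $i$ when $l_j\le i\le u_j$, and prove the trichotomy: (i) $x_i$ is a coloop of $\M_i$ exactly when some interval opens at $i$, i.e.\ $i=l_j$ for some $j$, since then $N_j\cap[i]=\{i\}$ is a singleton block forcing $i$ into every basis; (ii) $x_i$ is a loop of $\M_i$ exactly when no interval is open at $i$; and (iii) in the remaining case, where $i$ lies strictly inside one or more open intervals, $x_i$ is added by a principal extension. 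The coloop and loop cases are short computations with the endpoints, and each implication is an ``exactly when'' because a freely added point on a flat of positive rank is neither a loop nor a coloop; the real content is case (iii).

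In case (iii) the intervals open at $i$ form a consecutive block $N_a,\dots,N_b$ with $l_j<i\le u_j$, and I must pin down the flat $F$ on which $x_i$ is placed freely. The key point, and the main obstacle, is that $F$ is \emph{not} the span of the union of the open intervals: because overlapping intervals let a chosen representative cascade downward, the correct flat is the closure of the maximal chain $N_a,\dots,N_b$ of consecutively overlapping open intervals topping out at $N_b$, whose rank $\rho=b-a+1$ equals the chain length. I will show that this flat equals $F=\mathrm{cl}_{\M_{i-1}}(\{x_h,\dots,x_{i-1}\})$ with $h=i-\rho$, the closure of the last $\rho$ positions before $i$, and that $\{x_h,\dots,x_{i-1}\}$ is independent of rank $\rho$. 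Both the identity $\mathrm{cl}(\{x_h,\dots,x_{i-1}\})=F$ and the independence reduce to a Hall-type matching: the last $\rho$ positions admit a system of distinct representatives among $N_a,\dots,N_b$, and no position below $h$ is needed to span $F$. Verifying that the single-element extension adjoining $i$ to the open intervals coincides with the principal extension on $F$, that is $x_i\in\mathrm{cl}(X)$ iff $X$ spans $F$, closes this direction.

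For the converse I induct on $k$, assuming $\M_{i-1}$ is a lattice path matroid with a nested presentation and showing each operation stays in the class. Adding $x_i$ as a coloop is realized by appending the top singleton interval $[i,i]$, a North step on both boundary paths, and adding $x_i$ as a loop by adjoining $i$ to no interval, an East step on both paths; both keep the presentation nested and, by the forward trichotomy, produce the stated element type. The substantive case is again the principal extension along $F=\mathrm{cl}(\{x_h,\dots,x_{i-1}\})$ with $\{x_h,\dots,x_{i-1}\}$ independent of rank $\rho=i-h$: here I realize $F$ as the chain of the $\rho$ topmost recent intervals of $\M_{i-1}$ and modify the boundary paths near their common endpoint, inserting a single East step so that the new position $i$ becomes a free point of the rank-$\rho$ flat while the rank is left unchanged. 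This is the exact dual of the computation in case (iii), and the independence of the final segment $\{x_h,\dots,x_{i-1}\}$ is precisely what guarantees that the modified pair of paths is valid and realizes $\M_i$.

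I expect the genuine difficulty to be isolated in the flat-identification lemma shared by the two directions: that in a lattice path matroid the closure of a consecutive final segment is exactly the flat spanned by a maximal chain of consecutively overlapping intervals, together with the matching statement pinning down $h=i-\rho$. Once this lemma is in place, the coloop/loop/principal-extension trichotomy and the inductive presentation surgery become routine bookkeeping on the endpoints $l_j,u_j$, and assembling the two directions yields the stated equivalence.
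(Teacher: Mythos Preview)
The paper does not contain a proof of this statement: it is quoted as Theorem~6.7 of \cite{bonin2006lattice} and invoked without argument as an input to the proof of Theorem~\ref{thm:LPM_not_HPP}. There is therefore no proof in the present paper against which to compare your proposal.

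For what it is worth, your outline is a reasonable plan for establishing the result directly. The trichotomy you describe---position $i$ is a coloop of $\M_i$ iff some interval has $l_j=i$, a loop iff no interval contains $i$, and otherwise a principal extension on the flat spanned by the currently open intervals---is the natural reading of the nested-interval presentation, and the converse by presentation surgery (append a singleton block for a coloop, append nothing for a loop, extend the top block of intervals for a principal extension) is the expected move. The one place your sketch is genuinely incomplete is the flat-identification lemma in case~(iii): the assertion that the flat equals $\mathrm{cl}_{\M_{i-1}}(\{x_{i-\rho},\dots,x_{i-1}\})$ with $\rho$ the number of open intervals, and that this segment is independent, needs a careful Hall/matching computation that you only gesture at. If you want to include a self-contained proof, that lemma is where the work lies; otherwise, citing \cite{bonin2006lattice} as the paper does is entirely appropriate.
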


We also recall that the half-plane property for a matroid $\M$ is conserved under taking minors, duals, 2-sums, principal extensions, principal truncations, and direct sums \cite[Section 4]{choe2004homogeneous}.

We also would like to mention some important examples of lattice path matroids, that we know satisfy the half-plane property. Firstly, uniform matroids, which are known to be lattice path matroids, do satisfy the half-plane property \cite[Section 9]{choe2004homogeneous}. Additionally, lattice path matroids that are snakes are known to be graphical matroids \cite[Theorem 2.2]{knauer2018tutte} which means that snakes are regular matroids, which in turn implies that basis-generating polynomials for snakes are determinantal, hence snakes are SOS- Rayleigh, which means they are strongly Rayleigh and therefore snakes do satisfy the half-plane property.

With these results, we now can present the following result on lattice path matroids,

\begin{theorem}\label{thm:LPM_not_HPP}
Lattice path matroids satisfy the half-plane property. \end{theorem}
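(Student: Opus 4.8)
The plan is to prove the theorem by induction on the size $k$ of the ground set, using the iterative construction of lattice path matroids from Theorem~\ref{thm:recurs_LPM} together with the fact, recalled above from \cite[Section~4]{choe2004homogeneous}, that the half-plane property is preserved by direct sums and by principal extensions. The point is simply that every move allowed in Theorem~\ref{thm:recurs_LPM} is a move under which the half-plane property is stable, so once we have it for $\M_{i-1}$ we get it for $\M_i$. For the base case, a lattice path matroid on at most one element is the empty matroid, a single loop, or a single coloop, with basis-generating polynomial $1$, $1$, or $z_1$ respectively; each of these is stable.

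For the inductive step, let $\M$ be a lattice path matroid on $\{x_1,\dots,x_k\}$ realized as in Theorem~\ref{thm:recurs_LPM}, and assume that $\M_{k-1}=\M|\{x_1,\dots,x_{k-1}\}$, which is again a lattice path matroid, has the half-plane property. If $x_k$ is added as a coloop, then $\M$ is the direct sum of $\M_{k-1}$ with a one-element matroid whose element is a coloop, so its basis-generating polynomial is $z_k$ times that of $\M_{k-1}$; if $x_k$ is added as a loop, then the basis-generating polynomial of $\M$ equals that of $\M_{k-1}$, now read in one additional (dummy) variable. In either case stability is inherited, since a product of stable polynomials is stable and adjoining a variable on which a polynomial does not depend preserves non-vanishing on the open upper half-plane. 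If instead $x_k$ is added via the principal extension of $\M_{k-1}$ generated by the closure of a block $\{x_h,\dots,x_{k-1}\}$, then $\M$ is a single-element principal extension of $\M_{k-1}$, and the half-plane property is preserved under principal extension by \cite[Section~4]{choe2004homogeneous}. Hence $\M$ has the half-plane property in all three cases, and the induction is complete. One may in fact unify the three moves: adding a coloop is the principal extension on the top flat $E$, adding a loop is the principal extension on the rank-zero flat $\mathrm{cl}(\emptyset)$, and the third move is the principal extension on $\mathrm{cl}(\{x_h,\dots,x_{k-1}\})$, so the entire construction of Theorem~\ref{thm:recurs_LPM} stays inside the class of HPP matroids.

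I do not expect a serious technical obstacle here: the only thing that genuinely needs checking is the bookkeeping that operations (i)--(iii) of Theorem~\ref{thm:recurs_LPM} are literally the operations in the preservation list of \cite[Section~4]{choe2004homogeneous} (direct sum with a loop or coloop, and single-element principal extension), which is immediate once the definitions are lined up. As consistency checks this also recovers the special cases noted above — uniform matroids (obtained from a Boolean matroid by repeated principal truncation) and snakes (regular, hence SOS-Rayleigh) — without needing them as inputs. If one wanted an even more self-contained argument, the coloop and loop cases could be verified by hand from the product formula for the basis-generating polynomial of a direct sum, leaving the principal-extension stability statement of \cite{choe2004homogeneous} as the single external ingredient.
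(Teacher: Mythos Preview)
Your overall strategy is the same as the paper's: induct on the size of the ground set via the recursive characterization in Theorem~\ref{thm:recurs_LPM}, and check that each of the three allowed moves preserves the half-plane property. The loop and coloop cases are handled essentially identically (the paper does a direct Rayleigh-difference computation for the coloop, you invoke closure under direct sums; both are fine, and your unification of all three moves as principal extensions is a nice observation).

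The gap is in the principal-extension step. You write that ``the half-plane property is preserved under principal extension by \cite[Section~4]{choe2004homogeneous}'', echoing the informal recall just before the proof. But the precise statement available in \cite{choe2004homogeneous} is Proposition~4.12, which only guarantees preservation for \emph{nice} principal extensions: one must exhibit nonnegative weights $(\lambda_f)_{f\in F}$ so that $\sum_{f\in F:\,B\cup\{f\}\in\mathcal B(\M)}\lambda_f$ takes the same nonzero value for every basis $B$ of $\text{tr}_F(\M)$. This is not automatic for an arbitrary flat $F$, and it is exactly the content the paper's proof adds in case~(iii): for the specific flats $F=\text{cl}\{x_h,\dots,x_{i-1}\}$ arising in Theorem~\ref{thm:recurs_LPM} it produces weights (of the form $\lambda_j=1/k$) satisfying the niceness equation. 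Without that verification your inductive step for case~(iii) is incomplete; once you supply it, your argument and the paper's coincide.
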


\begin{proof}
We consider $\M$ to be a rank $k$ lattice path matroid on $n$ elements. Firstly, we consider the case when $\M$ is connected. We proceed in our proof via induction on the number of elements $n$ of $\M$. For the base case, we consider $n=2$, we realize that $\M$ is either a uniform matroid or a snake, and in both these cases we realize that the basis generating polynomial does satisfy the half-plane property as proven previously in \cite{choe2004homogeneous, knauer2018tutte}. 

Hence, we proceed with the case where we consider the case that all lattice path matroids on $n$ elements satisfy the half-plane property. We now consider a lattice path matroid $M$ on $n+1$ elements and by Theorem \ref{thm:recurs_LPM} we know that $M$ is obtained in a recursive way by the addition of a loop, coloop, or via principal truncation to a matroid $M'$ on $n$ elements. In the case of the addition of a loop, we know that the basis generating polynomial of $M$ and $M'$ is the same, and  $M'$ does satisfy the half-plane property because of our induction hypothesis, therefore $M$ also satisfies the half-plane property. Similarly, if we consider the addition of a coloop $e$ and if $f$ is the basis generating polynomial of $\M'$, then $f$ satisfies the half-plane property and the basis generating polynomial of $\M$ is $g = x_{e} \cdot f$. We compute the \emph{Rayleigh difference} $\Delta_{i,j}$ for the polynomial $g$ for $\{i,j\} \subseteq \{[n] \cup e \}$ $(i \neq e, j \neq e)$,

\[  \frac{ \partial g} {\partial x_{i}} = x_{e} \cdot \frac{ \partial f} {\partial x_{i}} , \quad \frac{ \partial g} {\partial x_{j}} = x_{e} \cdot \frac{ \partial f} {\partial x_{j}}, \quad \frac{ \partial^{2} g} {\partial x_{i} \partial x_{j}} = x_{e} \cdot \frac{ \partial^{2} f} {\partial x_{i} \partial x_{j}} \]

\[  \Delta_{ij}(g)  = \frac{ \partial g} {\partial x_{i}} \frac{ \partial g} {\partial x_{j}} - \frac{ \partial^{2} g} {\partial x_{i} \partial x_{j}} \cdot g = (x_{e})^{2} (\Delta_{ij}(f)) \geq 0 \]

and see that it is non-negative in this case. For the other possible case we consider $i=e$ and $j \in [n]$, the Rayleigh difference is given as

\[  \frac{ \partial g} {\partial x_{e}} = f , \quad \frac{ \partial g} {\partial x_{j}} = x_{e} \cdot \frac{ \partial f} {\partial x_{j}}, \quad \frac{ \partial^{2} g} {\partial x_{i} \partial x_{j}} = \frac{ \partial f} {\partial x_{j}} \]

\[  \Delta_{ij}(g)  = \frac{ \partial g} {\partial x_{i}} \frac{ \partial g} {\partial x_{j}} - \frac{ \partial^{2} g} {\partial x_{i} \partial x_{j}} \cdot g = ( x_{e} \cdot f \cdot \frac{ \partial f} {\partial x_{j}} -  x_{e} \cdot f \cdot \frac{ \partial f} {\partial x_{j}}) = 0 \]

and is also non-negative in this case, which means that $M$ also satisfies the half-plane property. 

The remaining case is when the matroid $\M$ is obtained via a principal extension of a matroid on $n$ elements and we know that the half-plane property is conserved for a \emph{nice} principal extension \cite[Proposition 4.12]{choe2004homogeneous}. Consider the set $F = \{x_{1}, x_{2}, \hdots, x_{i-1} \}$ and we consider principal extension of $\M$ by the addition of an element $x_{i}$ to $\M=\M_{i-1}$ via $F$. We know that the bases of $\text{tr}_{F}(\M)$ are of the form $B \setminus f$ where $f \in F$ and the bases of the principal extension $\M +_{F} x_{i}$ are of the form 

\[ \mathcal{B}(\M +_{F} x_{i}) = \mathcal{B}(\M) \cup \{ B \cup \{x_{i}\} : B \in \mathcal{B}(\text{tr}_{F}(\M)) \}   \]

To ensure that this principal extension is nice, the coefficients $\lambda_{f}, f \in F$ need to satisfy the equality 

\[ \sum_{f \in F: B \cup \{f\} \in \mathcal{B}(\M)} \lambda_{f} = 1  \]

where $B \in \mathcal{B}(\text{tr}_{F}(\M))$. We see that for our given choice of $F$, let $S = \{f \in F:   B \cup \{f\} \in \mathcal{B}(\M), B \in \mathcal{B}(\text{tr}_{F}(\M))\} \subseteq F$ and let $|S| = k \leq |F|$. Then if fix the value of 

\[ \lambda_{j} = \frac{1}{k} , j \in S \]

then we see that the equation for a nice principal extension is satisfied and therefore by \cite[Proposition 4.12]{choe2004homogeneous} $\M$ also satisfies the half-plane property in this case. With this, we prove that all connected lattice path matroids satisfy the half-plane property.

We now consider the case when the lattice path matroid is disconnected. In this case $\M$ can be expressed as a direct sum of $\M_{1}, \hdots, \M_{k}$, where each $\M_{i}$ is a connected lattice path matroid which we showed satisfy the half-plane property. But we know that the half-plane property is also conserved under taking direct sums \cite[Section 4.3]{choe2004homogeneous}, therefore $\M$ satisfies the half-plane property in this case as well. Hence, the proof.

\end{proof}    

\begin{example}\label{eg:Example_principal}
We illustrate with the help of an explicit example of a lattice path matroid $\M$, the \emph{nice} principal extension obtained via the addition of an element. Consider the lattice path matroid $\M = M[125,356]$ of rank three on six elements and we consider $F = \{ x_{1}, x_{2}, \hdots, x_{6} \}$ and we consider the principal extension of $\M$ by the element $x_{7}$ via $F$. $\M$, $\text{tr}_{F}(\M)$ and the principal extension $\M +_{F} \{x_{7}\}$ are depicted in Figure \ref{fig:eg_nice_princ} and we realize that   

\begin{figure}
    \centering
\includegraphics[scale=0.4]{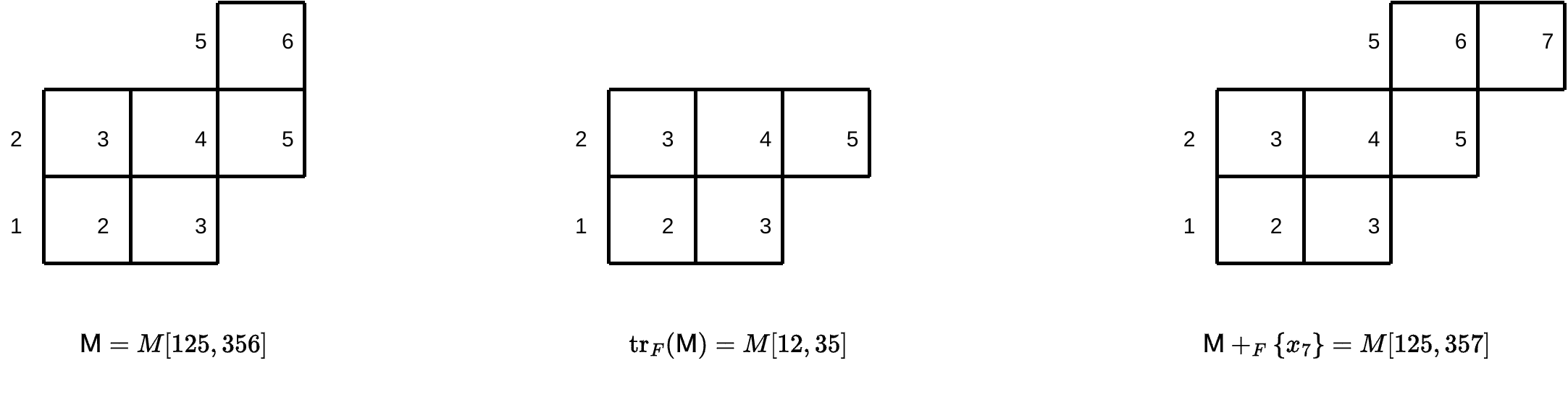}
    \caption{The matroid $\M$ discussed in Example \ref{eg:Example_principal} along with its truncation and principal extension.}
    \label{fig:eg_nice_princ}
\end{figure}

\[ \mathcal{B}(\M) = \{125,126,135,136,145,146,156,235,236,245,246,256,345,346,356 \}  \]

\[ \mathcal{B}(\text{tr}_{F}(\M)) = \{ 12,13,14,15,23,24,25,34,35 \} \]

\begin{multline*}
\mathcal{B}(\M +_{F} \> x_{7}) = \{ 125,126,135,136,145,146,156,235,236,245,246,256,345, \\ 346,356,127,137,147,157,237,247,257,347,357\}    \end{multline*}

We realize that the set $S = \{f \in F:   B \cup \{f\} \in \mathcal{B}(\M), B \in \mathcal{B}(\text{tr}_{F}(\M))\} = \{1,2,3,4,5,6\} = F$ and we can fix the value of coefficient $\lambda_{j} = 1/6, j \in S$, such that the principal extension is \emph{nice}.

\end{example}

\section{Bicircular Matroids and Positroids}

In this section, we consider a bicircular matroid $\M = \M(E, \mathcal{B}(\M))$ with a linear order $\leq$ on the ground set $E$, and we are interested to know whether $\leq$ defines a \emph{positroid order} on $\M$ in the sense of Bonin  \cite{bonin2023characterization}. An affirmative conclusion to this would mean that bicircular matroids are also positroids.

We point out that for a subclass of matroids to be positroids is not dependent necessarily on any one specific property. For contrast, if we consider graphical (also referred to as circular in \cite{kassel2022determinantal}) matroids and bicircular matroids, then we recall that these two are the only classes of matroids on the set of edges of a graph for which the set of circuits consists in all subgraphs homeomorphic to a given family of connected graphs \cite{pereira1972subgraphs}. However, we know that not all graphical matroids are positroids, notably if we consider the graphical matroid on the complete graph on four vertices $K_{4}$, i.e., $\M(K_{4})$, then we know that it is not a positroid \cite{bonin2023characterization}.
We also explain another motivation for this question. We show that the class of bicircular matroids already contains a class of matroids that are known to be positroids: namely the class of \emph{lattice path bicircular matroids}, which as the name suggests are matroids which are both lattice path matroids and bicircular matroids and were introduced and studied in \cite{guzman2022lattice}. We know by the work of Oh in \cite{oh2011positroids} that lattice path matroids are positroids, therefore lattice path bicircular matroids are also positroids. The authors in \cite{guzman2022lattice} also provide a list of excluded minors for bicircular matroids, which are not lattice path matroids, via the following result,

\begin{theorem}[Theorem 2 \cite{guzman2022lattice}]\label{thm:Bicirc_not_LPM}
A bicircular matroid is a lattice path matroid if and only if it has no one of the following matroids as a minor:

\[ C^{2,4}, \mathcal{W}^{3} , A^{3}, R^{3}, R^{4}, D^{4}, B^{1}, \> \text{and} \> S^{1} . \]   
\end{theorem}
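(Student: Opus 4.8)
The plan is to prove the two implications of this excluded-minor characterization separately. The ``only if'' direction is the routine one. Since the class of lattice path matroids is closed under taking minors, it suffices to check that each of the eight matroids $C^{2,4}$, $\mathcal{W}^{3}$, $A^{3}$, $R^{3}$, $R^{4}$, $D^{4}$, $B^{1}$, $S^{1}$ is itself bicircular but is \emph{not} a lattice path matroid; then no lattice path matroid, in particular no bicircular lattice path matroid, can contain one of them as a minor. Bicircularity of each is witnessed by an explicit graph $G$ with $\B(G)$ equal to that matroid (equivalently, by a presentation in which any three members have empty intersection, per \cite[Corollary~3.3]{matthews}), and non-membership in the class of lattice path matroids is a finite check for each of these small matroids --- e.g.\ by verifying that no linear order on the ground set produces a presentation by intervals $N_{i}=[l_{i},u_{i}]$ as in \cite[Definition~3.1]{bonin2003lattice}, or by locating inside it one of the known excluded minors for lattice path matroids.

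For the ``if'' direction --- the substance of the theorem --- I would argue the contrapositive: a bicircular matroid that is not a lattice path matroid has one of the eight matroids as a minor. The first move is to reduce to the connected, and as far as possible $3$-connected, case, using that both classes behave well under direct sums and $2$-sums, so that a minimal counterexample is highly connected. The second move is to pass to a convenient description of when a bicircular matroid $\B(G)$ is a lattice path matroid: combining the interval presentation of lattice path matroids with Matthews' characterization of bicircular transversal matroids as those admitting a presentation with empty triple intersections --- and using the theory of maximal presentations of transversal matroids --- one reformulates the goal as: either $\B(G)$ admits a presentation by intervals on a linearly ordered ground set in which no coordinate lies in three of them, or $\B(G)$ has a forbidden minor. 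Alternatively, one can work directly with the iterative construction of lattice path matroids in Theorem~\ref{thm:recurs_LPM} and try to peel off a last element of $\B(G)$ that is a loop, a coloop, or a principal-extension element along the closure of a final interval.

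With either reformulation I would then induct on $|E(\B(G))|$: every proper minor of $\B(G)$ is again bicircular with no forbidden minor, hence is a lattice path matroid by induction, so it carries the relevant structure; one then deletes or contracts a well-chosen edge or vertex of $G$, extends that structure across the removed element, and is done --- unless the extension is obstructed, in which case the obstruction must be shown to force one of $C^{2,4}$, $\mathcal{W}^{3}$, $A^{3}$, $R^{3}$, $R^{4}$, $D^{4}$, $B^{1}$, $S^{1}$ as a minor. The bicycles of $G$ ($\theta$-graphs and loose/tight handcuffs, Figure~\ref{fig:bicirc_def}) are what control which reinsertions are possible, so the case analysis is really a case analysis of how a vertex of $G$ can interact with these small cyclic subgraphs.

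The step I expect to be the real obstacle is completeness: proving that these eight matroids exhaust \emph{all} minor-minimal obstructions, i.e.\ that no ``bad'' configuration escapes the list. This is not one clean computation but an exhaustive structural case analysis of $3$-connected bicircular non-lattice-path matroids, and it has to be set up so that it terminates --- which requires a preliminary bound (via connectivity and the interval-presentation reformulation) showing that a minor-minimal counterexample has small rank (at most $4$, matching the superscripts in the list) and boundedly many elements, after which one enumerates. The companion verification --- that each of the eight is genuinely minor-minimal, i.e.\ every proper bicircular minor of it is a lattice path matroid, and that the eight are pairwise non-isomorphic --- is finite and is naturally discharged with a matroid computation (for instance in \polymake). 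Organizing the former case analysis so that nothing slips through is where I expect essentially all of the difficulty to lie.
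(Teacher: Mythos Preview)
The paper does not prove this theorem at all: it is quoted verbatim from \cite{guzman2022lattice} as a background result (with a pointer to \cite[Figure~2]{guzman2022lattice} for the representations of the excluded minors), and no argument for it appears anywhere in the present paper. So there is nothing here to compare your proposal against.

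For what it is worth, your outline is the standard shape of an excluded-minor proof and is broadly reasonable, but since the paper offers no proof of its own, the appropriate response is simply to cite \cite{guzman2022lattice} rather than to supply an independent argument.
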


We refer the reader to \cite[Figure 2]{guzman2022lattice} for the affine and bicircular representations of the six excluded minors listed in Theorem \ref{thm:Bicirc_not_LPM}. 

We now discuss the class of \emph{base sortable matroids} first introduced by Blum in \cite{blum2001base}. By the work of Bonin \cite{bonin2023characterization} we know that \emph{base sortable matroids} are positroids, 

\begin{theorem}[Theorem 2.5 \cite{bonin2023characterization}]
Let $M$ be a matroid. A linear order on $E(M)$ is a positroid order if and only if it is a base-sorting order.    
\end{theorem}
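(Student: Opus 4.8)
For the final theorem --- the equivalence, for a fixed linear order on $E(M)$, between being a positroid order and being a base-sorting order --- the plan is to prove the two implications separately after reducing to the connected case. Adjoining or deleting loops and coloops changes nothing for either property, and both ``being a positroid order'' and ``being base-sortable for the induced order'' hold for a direct sum exactly when they hold on each connected component; so one may assume $M$ is connected on ground set $[n]$ with the order $1 < 2 < \cdots < n$, which fixes the cyclic order underlying the theory of positroids. For two $k$-subsets $B_1, B_2$ listed in increasing order write $B_1 \wedge B_2$ and $B_1 \vee B_2$ for Blum's sorted pair: merge the two increasing lists and read off the entries in odd, respectively even, positions.

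For the forward direction (a positroid order is a base-sorting order), fix a $k \times n$ matrix $A$ over $\RR$ with all maximal minors $p_I(A) \ge 0$ that represents $M$, so that $I \in \cB(M)$ iff $p_I(A) > 0$. The key input is a straightening identity for products of Pl\"ucker coordinates,
\[ p_{B_1 \wedge B_2}(A)\, p_{B_1 \vee B_2}(A) = p_{B_1}(A)\, p_{B_2}(A) + \sum_{\alpha} p_{S_\alpha}(A)\, p_{T_\alpha}(A), \]
a finite sum with nonnegative integer coefficients obtained by iterating three-term Pl\"ucker relations in a way compatible with the sorting order (equivalently: the sorted products form a standard-monomial/SAGBI basis of the Pl\"ucker algebra, and the left-hand product is the leading term of the right-hand side). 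Since every minor of $A$ is nonnegative, this forces $p_{B_1 \wedge B_2}(A)\, p_{B_1 \vee B_2}(A) \ge p_{B_1}(A)\, p_{B_2}(A) > 0$ whenever $B_1, B_2$ are bases, so the sorted sets are bases too. The only delicate step is justifying the straightening identity, which I would take from standard-monomial theory rather than verify by hand.

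For the reverse direction (a base-sorting order is a positroid order), which I expect to be the main obstacle, I would argue via excluded minors. First show that base-sortability passes to every minor with its induced linear order: deletion is immediate, and for contraction by a non-loop $e$ one notes that $B_1, B_2 \in \cB(M/e)$ give $B_1 \cup e, B_2 \cup e \in \cB(M)$, that sorting these two sets places the two copies of $e$ in consecutive positions --- one in each sorted set, at the same index --- and that deleting $e$ from the result returns precisely the sorted pair of $B_1, B_2$ computed in the induced order on $E(M/e)$; hence that sorted pair consists of bases of $M/e$. Then go through Bonin's list of excluded minors for positroid orders and verify, by a direct computation for each, that no such matroid is base-sortable under any linear order of its ground set (for each one exhibit a pair of bases whose sorted pair is not a pair of bases). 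Combined with Bonin's excluded-minor characterization --- the one already used in the Introduction --- this shows a base-sorting order cannot have one of the forbidden minors, hence is a positroid order.

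The real content, and the expected difficulty, is bridging the purely local, pairwise sorting condition to the global cyclic-interval structure characterizing positroids; the argument above outsources this to Bonin's excluded-minor theorem, but one could instead proceed through matroid polytopes. Base-sortability says that $\conv\{\, \mathbf{1}_B : B \in \cB(M) \,\}$ is stable under the \emph{folding map} sending an unordered pair of vertices to its sorted pair, and the task becomes to show that this stability forces every facet-defining inequality of the polytope to be supported on a cyclic interval of $[n]$ --- exactly the defining property of positroid polytopes. I would expect that polytopal reformulation to give the cleanest link between the two notions, and hence a self-contained proof of the reverse implication.
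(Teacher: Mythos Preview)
The paper does not give its own proof of this theorem: it is simply quoted as Theorem~2.5 of \cite{bonin2023characterization} and used as a black box to identify base-sortable matroids with positroids, so there is no in-paper argument to compare your proposal against.

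That said, a comment on the substance of your sketch. The forward direction via the straightening identity for products of Pl\"ucker coordinates is the standard argument and is fine as stated, modulo the citation to standard-monomial theory you already flag. The reverse direction, however, has two issues. First, the excluded-minor characterisation of positroids in \cite{bonin2023characterization} that you propose to invoke may well depend on the very equivalence you are trying to prove, so you would need to check Bonin's paper carefully to avoid circularity. Second, even in rank three the list of excluded minors is infinite (the $k$-gons with a generic point, $k\ge 3$, from Corollary~\ref{cor:excluded-minor_pos}), so ``a direct computation for each'' is not a finite verification; the infinite families would need a uniform argument. Your closing polytopal suggestion --- showing that stability of the base polytope under the sorting/folding map forces all facet normals to be cyclic-interval indicators --- is much closer to a self-contained route and is essentially how the link between base-sortability and positroids is established in the literature (via the work of Lam--Postnikov on alcoved polytopes and of Ardila--Rinc\'on--Williams on positroid polytopes); pursuing that would remove both the circularity risk and the reliance on an infinite excluded-minor list.
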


Moreover, Blum listed all rank three excluded minors for the class of base-sortable matroids in \cite{blum2001base}, which we use subsequently for our results

\begin{corollary}[Corollary 4.12 \cite{blum2001base}]\label{cor:excluded-minor_pos}
Let $M$ be a matroid on $[d]$ of rank $3$. Then $M$ is base-sortable if and only if $M$ has no deletion $N$ with geometric representation:

\begin{enumerate}
    \item A $k$-gon, $k \geq 3$, whose edges are rank $2$ circuits, and an additional generic point, i.e.,
    \begin{figure}[H]
        \centering
        \includegraphics[scale=0.45]{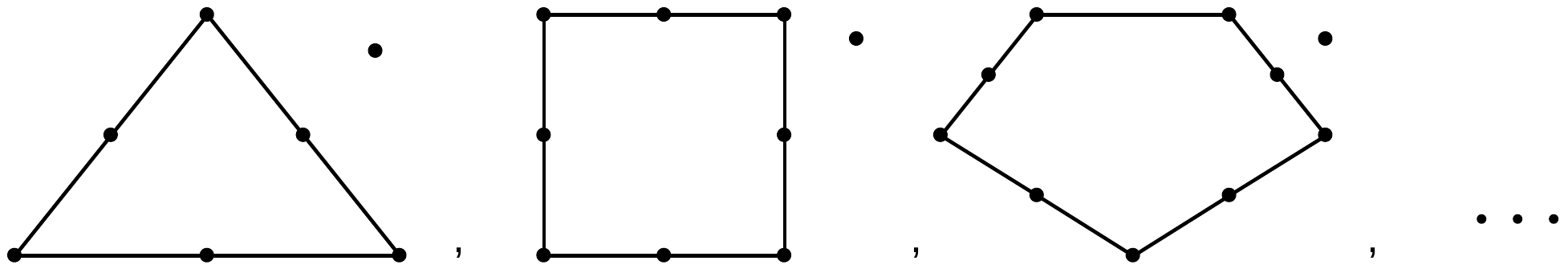}
        \label{fig:exclude_minor_1}
    \end{figure}
    \item One of the following:
        \begin{figure}[H]
        \centering
        \includegraphics[scale=0.425]{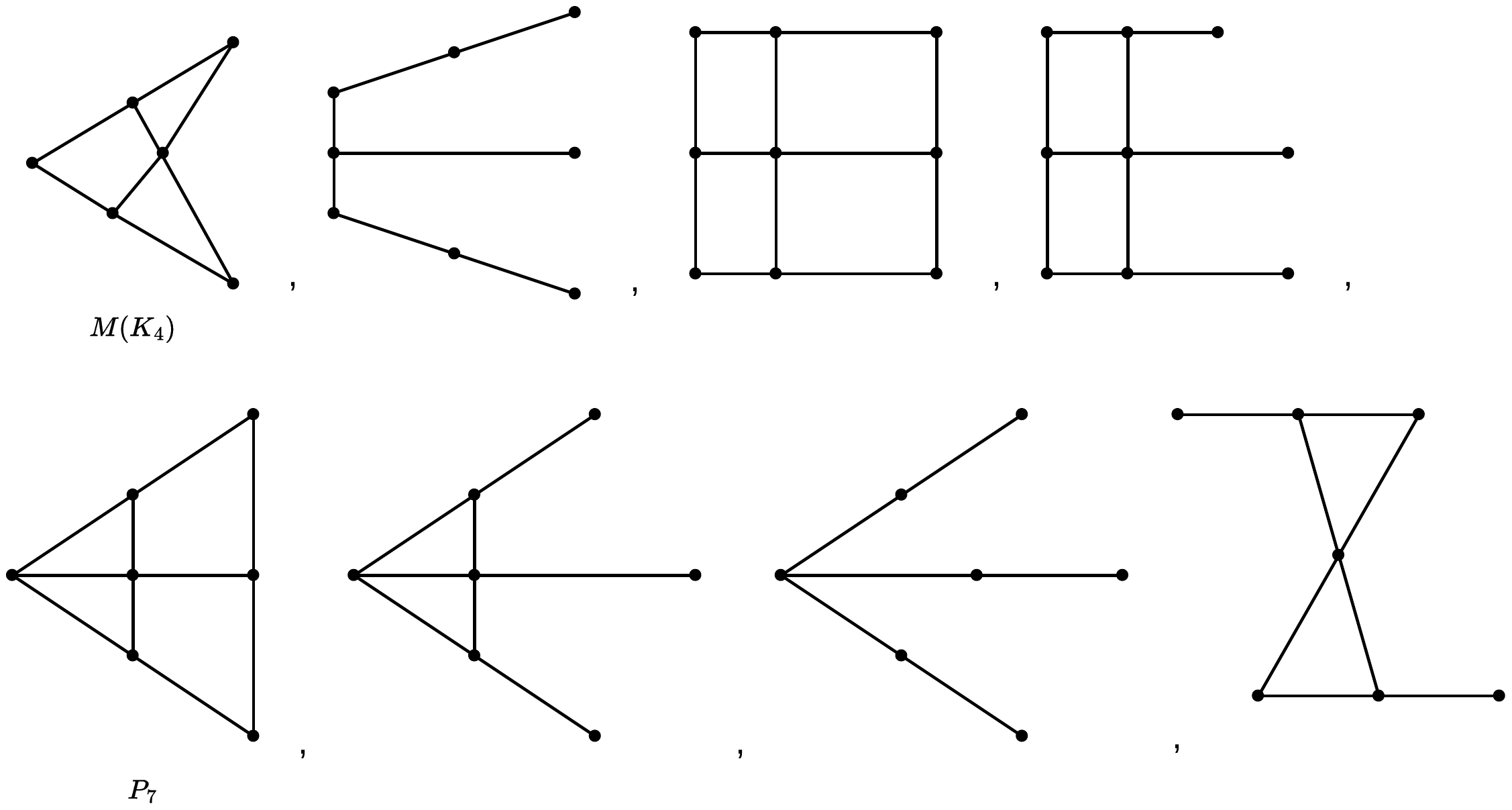}
        \label{fig:exclude_minor_2}
    \end{figure}
    where $M(K_{4})$ is the graphic matroid defined by $K_{4}$, the complete graph on $4$ vertices.
\end{enumerate}
\end{corollary}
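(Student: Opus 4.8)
The plan is to prove the biconditional by separating the two implications and reducing everything, as the statement already suggests, to \emph{deletions}: base-sortability is preserved under restriction, and any contraction of a rank-$3$ matroid has rank at most $2$, where base-sortability is automatic (rank-$\le 2$ matroids admit an obvious base-sorting order). So among all minors only deletions can witness a failure, which is why the obstructions are phrased as deletions $N$. With that in place the task splits into: (necessity) no base-sortable $M$ has a deletion of the listed form, and (sufficiency) a rank-$3$ matroid with no such deletion admits a base-sorting linear order.

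For the necessity direction I would show that each configuration $N$ appearing in (1) and (2) is itself not base-sortable, i.e.\ that for \emph{every} linear order on $E(N)$ there are two bases whose sorted merge, deinterleaved, contains a dependent triple; since a deletion of a base-sortable matroid is base-sortable, $M$ cannot then be base-sortable. For the $k$-gon family of (1) the mechanism is that the cyclic incidence pattern of the $k$ rank-$2$ circuits cannot be reconciled with a linear order: using the additional generic point together with the least vertex of the polygon one builds a pair of bases whose sorted split is forced to place three points on one of the $3$-point lines forming an edge, and the generic point is exactly what prevents any order from escaping this. For the finitely many configurations in (2), including $M(K_{4})$, this is a finite check: up to the automorphism group of each configuration there are only finitely many linear orders, and one exhibits a bad pair of bases for each.

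For the sufficiency direction — the substantive half — I would argue the contrapositive: if $M$ has rank $3$ and is not base-sortable, I produce a deletion from the list. Fix a linear order on $E(M)$, chosen as favorably as possible (refining a construction that processes the nontrivial lines one at a time), together with a witnessing pair of bases $B_{1},B_{2}$ whose sorted split is dependent, and pass to $N=M\,|\,(B_{1}\cup B_{2})$, a rank-$3$ matroid on at most six elements and hence one of finitely many isomorphism types of planar point configurations. Then run the case analysis: every such $N$ either is base-sortable (one writes down a working order, contradicting the choice of $B_{1},B_{2}$) or contains — possibly after enlarging back inside $M$ when $N$ is too small — one of the listed deletions. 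The ordering construction underlying the "base-sortable" cases is to order the points on each rank-$2$ flat with $\ge 3$ points consistently and then glue: the hypothesis that no $k$-gon-with-$3$-point-edges-plus-generic-point occurs is precisely what rules out a cyclic clash among these lines, and excluding the part-(2) configurations disposes of the remaining local obstructions coming from points lying on several lines and from the $M(K_{4})$-type coincidences.

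I expect the main obstacle to be this last case analysis, and within it the infinite $k$-gon family: one must show simultaneously that every such configuration obstructs base-sortability for \emph{all} orders (where the generic point does the work) and that these configurations together with the finite list are the \emph{only} obstructions, i.e.\ that a rank-$3$ matroid free of all of them can always be consistently ordered. Making the "order the lines one at a time, then glue" construction precise and verifying it never gets stuck under the stated hypotheses is the technical heart — a finite but intricate verification over the incidence patterns of up to six points in the plane — and, via the equivalence with positroid orders recorded earlier, it can be cross-checked against the excluded-minor theory for positroids.
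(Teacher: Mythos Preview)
The paper does not prove this statement at all: Corollary~\ref{cor:excluded-minor_pos} is quoted verbatim from Blum's paper \cite{blum2001base} and used as a black box in the subsequent discussion of bicircular matroids and positroids. There is therefore no ``paper's own proof'' to compare your proposal against.

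That said, your outline is a reasonable sketch of how one would expect such a result to be proved, and the reduction to deletions (since contractions drop rank below $3$, where base-sortability is automatic) is correct and is indeed why Blum phrases the excluded configurations as deletions rather than general minors. Whether your sufficiency argument --- ordering the nontrivial lines one at a time and gluing, with the absence of the $k$-gon family guaranteeing no cyclic obstruction --- matches Blum's actual argument, and whether the finite case analysis you describe genuinely terminates without further obstructions, cannot be assessed from this paper; you would need to consult \cite{blum2001base} directly. As written, your proposal is a plan rather than a proof: the ``finite but intricate verification'' you allude to is precisely where the content lies, and you have not carried it out.
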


Therefore, Corollary \ref{cor:excluded-minor_pos} provides us a list of excluded minors for positroids of rank at most three. 

\begin{figure}
    \centering
    \includegraphics[scale=0.5]{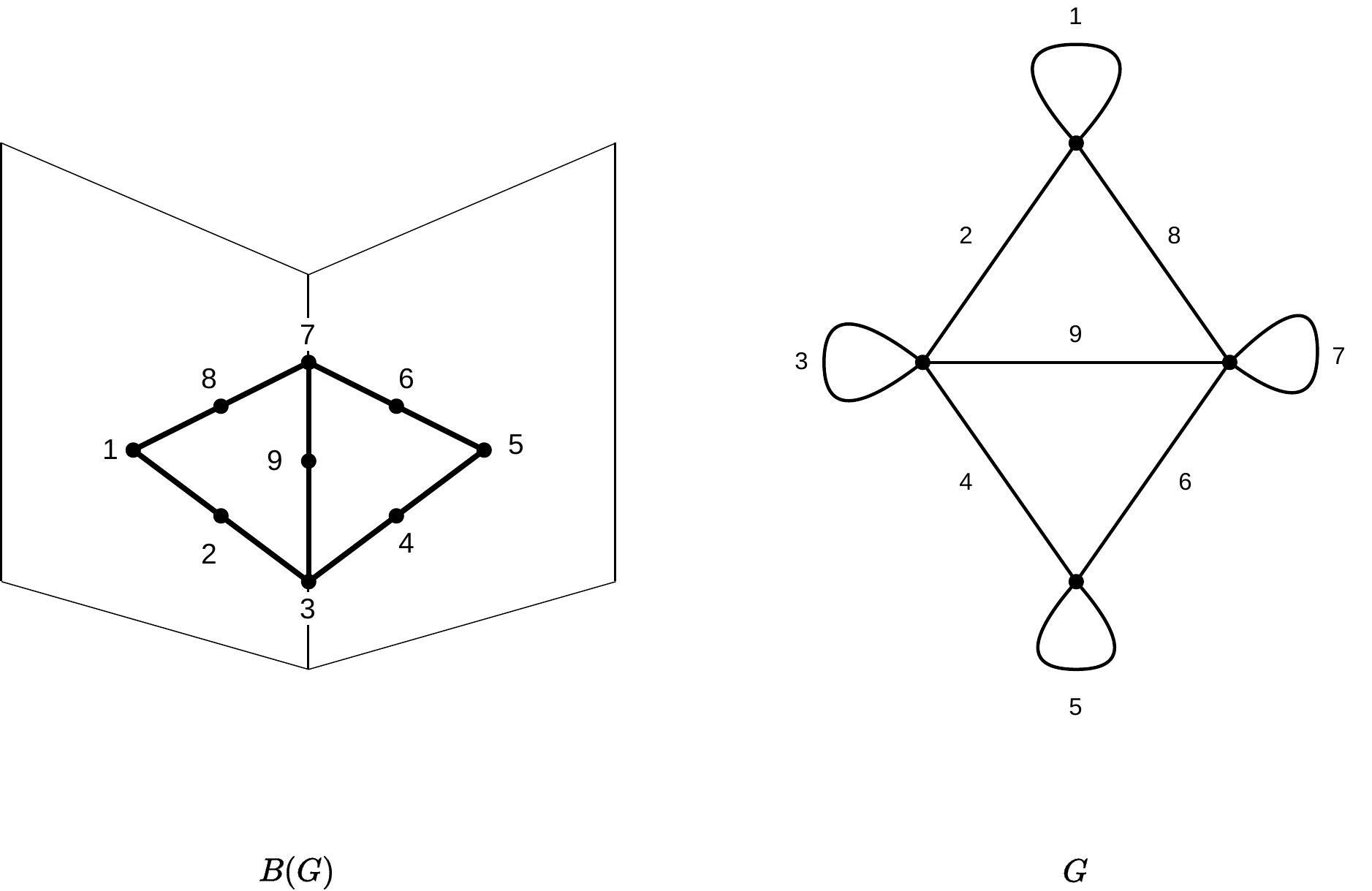}
    \caption{A rank four bicircular matroid $M$ which is not a positroid with one of its bicircular representations.}
    \label{fig:excl_min_del}
\end{figure}

\begin{example}\label{eg:bicicrc_not_pos}
Consider the bicircular matroid $B(G)$ illustrated in Figure \ref{fig:excl_min_del} along with its bicircular representation, also discussed in the context of excluded minors of positroids in \cite{bonin2023characterization}. In this case, $\M = B(G)$ is an excluded minor of positroids of rank four and hence is an example of a bicircular matroid that is not a positroid.
\end{example}

\begin{corollary}\label{cor:bicirc_and_pos}
The classes of bicircular matroids and positroids are incomparable, i.e., neither of these two classes is contained in the other.    
\end{corollary}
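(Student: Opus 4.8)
The plan is to prove the two non-containments separately. One half is already available: the rank-four matroid $\B(G)$ of Example~\ref{eg:bicicrc_not_pos} is bicircular but, being an excluded minor for positroids, is not a positroid, so the class of bicircular matroids is not contained in the class of positroids. The remaining content of the statement is therefore the reverse non-containment.

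For that reverse non-containment I would exhibit a positroid that is not bicircular, the cleanest candidate being the uniform matroid $U_{3,7}$. That $U_{3,7}$ is a positroid is standard: it is represented over $\RR$ by the $3\times 7$ matrix with columns $(1,t_i,t_i^2)$ for $0<t_1<\dots<t_7$, whose maximal minors are positive Vandermonde determinants; alternatively it is a lattice path matroid, e.g.\ via the presentation $N_i=\{i,i+1,\dots,i+4\}$ for $i\in[3]$, hence a positroid by \cite{oh2011positroids}. To see that $U_{3,7}$ is not bicircular, suppose $U_{3,7}=\B(G)$. Since $U_{3,7}$ is connected, all seven ground-set elements are edges of a single component of $G$; since $U_{3,7}$ has rank $3\neq 7$ it is not free, so that component is not a forest and therefore contains a cycle, whence the rank of $\B(G)$ equals the number of vertices of $G$. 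Thus $G$ has exactly three vertices and seven edges. It then remains to check that no multigraph on three vertices with seven edges can have all of its $3$-element edge sets independent in its bicircular matroid: avoiding a dependent triple forces at most two parallel edges in each parallel class, at most one loop per vertex, no loop at an endpoint of a doubled edge, and no edge joining two looped vertices, and a short case analysis shows that under these restrictions a graph on three vertices has at most six edges (with equality only for the doubled triangle, whose bicircular matroid is $U_{3,6}\neq U_{3,7}$). Hence $U_{3,7}$ is a positroid that is not bicircular, and combining the two directions gives the corollary.

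The only real work is this last verification that $U_{3,7}$ is not bicircular; it uses (i) the rank formula for the bicircular matroid of a connected graph containing a cycle, which pins $G$ down to three vertices, and (ii) the elementary edge-count bound for loop- and multiplicity-restricted multigraphs on three vertices. If one prefers to avoid graph combinatorics, the same conclusion follows from Matthews' criterion recalled in the preliminaries---bicircular matroids are exactly the transversal matroids admitting a presentation in which every three members have empty intersection---by verifying that every presentation of $U_{3,7}$ by three sets has a triple of members with nonempty intersection; but the graph argument above is shorter. I expect no difficulty in the routine steps; the case analysis in step (ii) is the one place where a little care is needed. As a byproduct this shows that not every lattice path matroid, hence not every positroid, is bicircular, complementing Theorem~\ref{thm:Bicirc_not_LPM}.
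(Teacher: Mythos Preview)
Your proof is correct and follows the same two-step plan as the paper: one non-containment is Example~\ref{eg:bicicrc_not_pos}, and the other is obtained by exhibiting a lattice path matroid (hence a positroid) that fails to be bicircular. The paper dispatches this second direction in a single line by pointing to \cite[Figure~7]{hogan2023excluded} for examples of lattice path matroids that are not bicircular; you instead give a self-contained argument that $U_{3,7}$ is a positroid but not bicircular, using the rank formula for $\B(G)$ on a connected non-forest $G$ to force $|V(G)|=3$ and then a short edge-count on three-vertex multigraphs (no triple parallel class, at most one loop per vertex, no loop at an endpoint of a doubled edge, no edge between two looped vertices) to cap $|E(G)|$ at six. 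Your version buys independence from the external citation at the price of a brief case analysis; otherwise the two arguments are interchangeable.
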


\begin{proof}
It is clear by Example \ref{eg:bicicrc_not_pos} that the class of bicircular matroids does not lie inside the class of positroids. For example to show the non-containment of positroids inside bicircular matroids, one can consider any lattice path matroid, which is not bicircular, and such matroids have been listed in \cite[Figure 7]{hogan2023excluded}.   
\end{proof}

\begin{remark}
We do want to highlight some recent work done on the negative correlation property of basis elements of bicircular matroids. In \cite{kassel2022determinantal} it is shown that there exists a non-uniform determinantal probability measure on the bases of bicircular matroids, building on previous work done on studying probability measures on cycle-rooted spanning forests in \cite{kenyon2011spanning, lyons2003determinantal} which also turns out to be strongly Rayleigh based on results from \cite[Proposition 3.5]{borcea2009negative}. However, since this measure is non-uniform, its generating polynomial is not the same as the basis generating polynomial of the matroid, hence these results can only be extended to show that bicircular matroids satisfy the \emph{weak half-plane property}, which although is already known to be true for the superclass of transversal matroids \cite[Corollary 8.2]{choe2004homogeneous}.    
\end{remark}

\section{Sparse paving matroids are not Rayleigh}

We recall the following definitions,

\begin{definition}
A matroid $\M$ is said to be \emph{paving} if it has no circuits of size less than the rank $r(\M)$.   
\end{definition}

\begin{definition}
A matroid $\M$ is said to be \emph{sparse paving} if $\M$ and its dual $\M^{*}$ both are paving.    
\end{definition}

Paving and sparse paving matroids are immensely important classes of matroids, especially in understanding the asymptotics of matroids, as it is conjectured that almost all matroids are paving. With the enumeration of all matroids on up to 8 elements done in \cite{blackburn1973catalogue}, it was speculated by Crapo and Rota that paving matroids would dominate any enumeration of matroids, which is stated as a conjecture in \cite[Conjecture 1.6]{mayhew2011asymptotic} and they also conjecture that asymptotically almost every matroid has a sparse paving matroid as a minor \cite[Conjecture 1.7]{mayhew2011asymptotic}. Then in \cite{pendavingh2015number} Pendavingh and Pol prove that on log scale, almost all matroids are sparse paving, i.e., if $m_{n}$ denotes the number of matroids on $n$ elements and $s_{n}$ denotes the number of sparse paving matroids on $n$ elements, then 

\[ \lim_{n\to\infty} \frac{\log s_{n}}{\log m_{n}} = 1  \]

and it is conjectured that if the log factor is removed, then almost all matroid are sparse paving.

In \cite{jerrum2006two}, Jerrum proves the first result concerning negative correlation properties of sparse paving matroids.

\begin{lemma}[Lemma 2 \cite{jerrum2006two}]\label{lem:Lem_jerrum}
Sparse paving matroids are balanced.    
\end{lemma}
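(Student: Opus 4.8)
The plan is to follow Jerrum's two‑step strategy: reduce the statement to a single correlation inequality for an arbitrary sparse paving matroid, and then establish that inequality by exploiting the rigid combinatorics of such matroids. First I would record that the class of sparse paving matroids is minor‑closed (an $r$‑subset that is a basis of $\M$ remains a basis in any single‑element deletion or contraction, and the circuit‑hyperplanes of a minor stay pairwise ``far apart''). Since a matroid is balanced precisely when it and all its minors are negatively correlated, it then suffices to prove that every sparse paving matroid satisfies negative correlation. So fix a sparse paving matroid $\M$ of rank $r$ on $E$ with $|E| = n$, and recall its structure: $\M$ is the uniform matroid $U_{r,n}$ with a family $\mathcal{H}$ of $r$‑subsets (the circuit‑hyperplanes) removed from the bases, where $|H \cap H'| \le r-2$ for all distinct $H, H' \in \mathcal{H}$. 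The loop and coloop cases, and ranks $r \le 1$, are disposed of immediately (there $\mathbb{P}(e) \in \{0,1\}$), so we may assume $2 \le r \le n-2$ and that $e, f$ are distinct non‑loops and non‑coloops.

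Next I would reduce negative correlation for a fixed pair $e \neq f$ to a clean count. Split the bases of $\M$ into four classes by intersection with $\{e,f\}$, and let $s, t, p, q$ be the numbers of bases that, respectively, contain $\{e,f\}$, are disjoint from $\{e,f\}$, meet $\{e,f\}$ exactly in $\{e\}$, and meet it exactly in $\{f\}$. Using $\mathbb{P}(ef) = s/b$, $\mathbb{P}(e) = (s+p)/b$, $\mathbb{P}(f) = (s+q)/b$ with $b = s+t+p+q$, the inequality $\mathbb{P}(ef) \le \mathbb{P}(e)\mathbb{P}(f)$ simplifies to $st \le pq$. Writing $E' = E \setminus \{e,f\}$ and $n' = n-2$, we have $s = \binom{n'}{r-2} - z$, $t = \binom{n'}{r} - w$, $p = \binom{n'}{r-1} - x$, $q = \binom{n'}{r-1} - y$, where $z, w, x, y$ count the members of $\mathcal{H}$ meeting $\{e,f\}$ in $\{e,f\}$, in $\emptyset$, in $\{e\}$, in $\{f\}$ respectively. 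The separation hypothesis on $\mathcal{H}$ then yields the constraints that drive the proof: the $(r-1)$‑subsets $K \subseteq E'$ with $K \cup e \in \mathcal{H}$ and those with $K \cup f \in \mathcal{H}$ are disjoint (so $x + y \le \binom{n'}{r-1}$); no $(r-1)$‑subset of an $\mathcal{H}$‑member lying inside $E'$ is of either kind; and no $(r-1)$‑superset of an $(r-2)$‑set $I$ with $I \cup \{e,f\} \in \mathcal{H}$ is of either kind.

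The heart of the matter is the inequality $st \le pq$, equivalently $|\mathcal{Z}| \cdot |\mathcal{W}| \le |\mathcal{P}| \cdot |\mathcal{Q}|$ for the four classes $\mathcal{Z},\mathcal{W},\mathcal{P},\mathcal{Q}$ of bases. I would attack this by building an injection $\Phi \colon \mathcal{Z} \times \mathcal{W} \hookrightarrow \mathcal{P} \times \mathcal{Q}$: given $(S,T)$ with $\{e,f\} \subseteq S$ and $\{e,f\} \cap T = \emptyset$, we have $f \in S \setminus T$, so by Brualdi's symmetric exchange property there is $g \in T \setminus S$ with both $S - f + g$ and $T - g + f$ bases; then $S - f + g \in \mathcal{P}$ and $T - g + f \in \mathcal{Q}$, and taking the least such $g$ in a fixed order on $E$ gives a well‑defined $\Phi(S,T)$. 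To recover $(S,T)$ from $(S',T') = \Phi(S,T)$ one only needs to locate $g$, which must lie in $S' \setminus T'$ and differ from $e$; the crucial point — and the place where sparse paving is indispensable — is to show this choice is unambiguous, i.e. that $\Phi$ is injective, using that an exchange fails only by landing inside $\mathcal{H}$ and that the packing condition severely restricts which exchanges can fail simultaneously. Making this bookkeeping fully rigorous — equivalently, deducing $(\binom{n'}{r-2} - z)(\binom{n'}{r} - w) \le (\binom{n'}{r-1} - x)(\binom{n'}{r-1} - y)$ from log‑concavity of the binomial coefficients $\binom{n'}{k}$ together with the structural constraints on $z,w,x,y$ — is the main obstacle: without those constraints the abstract inequality is simply false (take $z = w = y = 0$ and $x$ as large as the raw range allows), so the sparse paving structure must be used in an essential way.

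With $st \le pq$ established for every pair $e \neq f$, every sparse paving matroid is negatively correlated, and since the class is minor‑closed this shows that every sparse paving matroid is balanced, as required.
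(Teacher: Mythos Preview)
The paper does not supply its own proof of this lemma: it is simply quoted from Jerrum's paper \cite{jerrum2006two} as background, so there is no in-paper argument to compare against.

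Regarding your proposal on its own merits: the reduction is set up correctly (minor-closure of sparse paving matroids, reduction of balance to negative correlation, the decomposition into $s,t,p,q$ and the equivalent inequality $st \le pq$, and the parametrisation $s=\binom{n'}{r-2}-z$, etc.). However, the proof is explicitly unfinished. You yourself flag the crux --- showing that $\Phi$ is injective, or equivalently deducing the corrected log-concavity inequality from the packing constraints on $x,y,z,w$ --- as ``the main obstacle'' and do not carry it out. The ``least $g$'' rule defines $\Phi$ but gives no inversion: from $(S',T')$ you must recover $g \in S'\setminus T'$, yet several elements of $S'\setminus T'$ may be eligible, and nothing in your construction records which one was used. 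Without either completing the injection (e.g.\ by a genuine canonical-pairing argument that survives the removal of circuit-hyperplanes) or proving the numerical inequality directly from the constraints you list, what you have is a plan rather than a proof.
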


At this juncture, we also would like to recall some results concerning 3-connected matroids. Firstly, in  \cite{LOWRANCE2013115} Lowrance, Oxley, Semple and Welsh proved the following result about the asymptotics of 3-connected matroids,

\begin{theorem}[Theorem 4.2\cite{LOWRANCE2013115}]\label{thm:3-connec_almost}
Almost all n-element matroids are 3-connected.    
\end{theorem}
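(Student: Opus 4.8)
The plan is to recast the statement enumeratively. Write $m_{n}$ for the number of labelled matroids on $[n]$ and $c_{n}$ for the number of those that are $3$-connected; then ``almost all $n$-element matroids are $3$-connected'' means $c_{n}/m_{n}\to 1$, equivalently that the number $N_{n}:=m_{n}-c_{n}$ of non-$3$-connected matroids is $o(m_{n})$. The first step is structural: a matroid fails to be $3$-connected exactly when it has a $1$- or a $2$-separation, and in the language of the $2$-sum operation defined above this is the same as being either a direct sum $\M_{1}\oplus\M_{2}$ of two nonempty matroids (a $1$-separation) or a $2$-sum $\M_{1}\oplus_{2}\M_{2}$ with basepoint (a $2$-separation). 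In both cases the two parts have at most $n-1$ elements. I would therefore bound $N_{n}$ by counting all matroids admitting such a decomposition.

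For the counting, the direct-sum contribution is at most $\sum_{k=1}^{n-1}\binom{n}{k}m_{k}m_{n-k}$, obtained by choosing a ground-set split of sizes $k$ and $n-k$ and a matroid on each piece; the $2$-sum contribution adds a choice of shared basepoint (a factor polynomial in $n$) and replaces the sizes by $a,b$ with $a+b=n+2$ and $3\le a,b\le n-1$. Because $m_{j}$ grows doubly exponentially, each such sum is dominated by its most unbalanced terms, where one side has bounded size; the extremal terms are $\Theta(n\,m_{n-1})$ (one side a single loop or coloop) and $\Theta(\mathrm{poly}(n)\,m_{n-1})$ (one side a fixed small $2$-summand). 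Hence $N_{n}=O(\mathrm{poly}(n)\cdot m_{n-1})$. Conversely, the matroids possessing a loop or a coloop already number $\Theta(n\,m_{n-1})$ and are non-$3$-connected, so the estimate is tight and the entire theorem reduces to the single inequality $\mathrm{poly}(n)\cdot m_{n-1}=o(m_{n})$.

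The analytic heart, and the step I expect to be hardest, is to prove that inequality, i.e. that $\Delta_{n}:=\log_{2}m_{n}-\log_{2}m_{n-1}\to\infty$ faster than $\log_{2}n$. A lower bound on $m_{n}$ comes from sparse paving matroids: a rank-$\lfloor n/2\rfloor$ sparse paving matroid is specified by a family of circuit-hyperplanes forming a stable set in the Johnson graph $J(n,\lfloor n/2\rfloor)$, which gives $\log_{2}m_{n}\ge\log_{2}s_{n}=\Omega(2^{n}/n^{3/2})$. The obstacle here is genuinely quantitative: neither the classical upper bounds on $m_{n-1}$ nor even the log-scale dominance $\log s_{n}/\log m_{n}\to 1$ recalled above is sharp enough to force $\Delta_{n}>0$, since each leaves a factor of order $n$ (respectively $\sqrt{n}\log n$) of slack in the exponent, and a priori $m_{n-1}$ could sit within that slack of $m_{n}$. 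Overcoming this requires first-order matching asymptotics of the form $\log_{2}m_{n}=(1+o(1))\log_{2}s_{n}=(\gamma+o(1))\,2^{n}/n^{3/2}$ for a fixed constant $\gamma>0$ (a sharp lower bound via stable sets in the Johnson graph together with a matching entropy/compression upper bound). The payoff is the per-step doubling of the double logarithm: since $2^{n}/n^{3/2}\sim 2\cdot 2^{n-1}/(n-1)^{3/2}$, one gets $\log_{2}m_{n-1}=(1+o(1))\tfrac12\log_{2}m_{n}$, whence $\Delta_{n}=(\tfrac12+o(1))\log_{2}m_{n}=\Theta(2^{n}/n^{3/2})$.

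Finally I would assemble the pieces: $N_{n}/m_{n}=O\!\big(\mathrm{poly}(n)\,m_{n-1}/m_{n}\big)=O\!\big(\mathrm{poly}(n)\,2^{-\Delta_{n}}\big)\to 0$, because $\Delta_{n}$ grows doubly exponentially while the prefactor is only polynomial. This yields $c_{n}/m_{n}\to 1$, i.e. almost all $n$-element matroids are $3$-connected. The small-matroid conventions for $3$-connectivity and the harmless overcounting in the separation bounds affect only lower-order terms and so do not disturb the limit; the load-bearing ingredient is entirely the sharp enumeration underlying the third paragraph.
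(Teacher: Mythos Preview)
This theorem is quoted from \cite{LOWRANCE2013115} and is not proved in the present paper, so there is no in-paper argument to compare against; I can only assess your sketch on its own merits.

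Your structural reduction is fine: failure of $3$-connectivity is exactly the presence of a $1$- or $2$-separation, and bounding those by sums of products $m_{k}m_{n-k}$ (with a polynomial prefactor for the $2$-sum basepoint) is the right shape. You also correctly isolate the load-bearing inequality $\mathrm{poly}(n)\,m_{n-1}=o(m_{n})$.

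The genuine gap is in how you propose to establish that inequality. You invoke ``first-order matching asymptotics'' $\log_{2}m_{n}=(\gamma+o(1))\,2^{n}/n^{3/2}$ with a fixed $\gamma$, but no such result is known. What is known is $\log m_{n}=(1+o(1))\log s_{n}$ together with two-sided bounds of the form
\[
\tfrac{1}{n}\binom{n}{\lfloor n/2\rfloor}\ \le\ \log s_{n}\ \le\ \log m_{n}\ \le\ \tfrac{2+o(1)}{n}\binom{n}{\lfloor n/2\rfloor},
\]
a constant-factor (roughly factor $2$) window. That window sits exactly at the threshold your argument needs: since $\binom{n}{\lfloor n/2\rfloor}\sim 2\binom{n-1}{\lfloor (n-1)/2\rfloor}$, the best available upper bound on $\log m_{n-1}$ and the best lower bound on $\log m_{n}$ are asymptotically equal, so one cannot deduce $\log m_{n}-\log m_{n-1}\to\infty$ from them. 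Likewise, the $o(1)$ in $\log m_{n}=(1+o(1))\log s_{n}$ is not controlled tightly enough to survive a one-step comparison. Your third paragraph therefore assumes an enumerative statement that is presently open, and the derivation of $\Delta_{n}=(\tfrac12+o(1))\log m_{n}$ from it is unsupported. A proof along the lines of \cite{LOWRANCE2013115} has to avoid this head-on comparison of $m_{n-1}$ with $m_{n}$ through matching constants and instead bound the number of separable matroids more directly against Knuth's explicit lower bound; your outline does not supply such an argument.
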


Also, 3-connectivity is intrinsically linked with 2-sums,

\begin{theorem}[Theorem 8.3.1 \cite{oxley}]
A $2-$connected matroid $\M$ is not $3-$connected if and only if $\M = \M_{1} \oplus_{2} \M_{2}$ for some matroids $\M_{1}$ and $\M_{2}$, each of which has at least three elements and is isomorphic to a proper minor of $\M$.
\end{theorem}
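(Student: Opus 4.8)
The plan is to translate the statement into the language of the matroid connectivity function and then exploit the standard correspondence between exact $2$-separations and $2$-sum decompositions. Recall that for $\M$ on ground set $E$ with rank function $r$, the connectivity function is $\lambda(X) = r(X) + r(E \setminus X) - r(\M)$; that $\M$ is $2$-connected (i.e.\ connected) precisely when $\lambda(X) \geq 1$ for every $X$ with $\emptyset \neq X \neq E$; and that $\M$ fails to be $3$-connected precisely when some partition $(X,Y)$ of $E$ satisfies $|X|,|Y| \geq 2$ and $\lambda(X) \leq 1$. Throughout, $2$-connectedness of $\M$ is the standing hypothesis, so the forward direction must produce the decomposition from ``not $3$-connected,'' and the converse must deduce ``not $3$-connected'' from the decomposition.

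For the forward direction I would first combine the two facts above: if $\M$ is connected but not $3$-connected, then there is a partition $(X,Y)$ with $|X|,|Y| \geq 2$ and $1 \leq \lambda(X) \leq 1$, hence an \emph{exact} $2$-separation with $r(X)+r(Y)=r(\M)+1$. Next I would build the two parts by introducing a new element $p$ and defining $\M_1$ on $X \cup \{p\}$ via $r_{\M_1}(A) = r(A)$ and $r_{\M_1}(A \cup \{p\}) = r(A \cup Y) - r(Y) + 1$ for $A \subseteq X$, and symmetrically $\M_2$ on $Y \cup \{p\}$. A short computation using $\lambda(X)=1$ gives $r_{\M_1}(\{p\}) = 1$ and $r_{\M_1}(X \cup \{p\}) = r(X)$, so $p$ is neither a loop nor a coloop of $\M_1$ (and likewise for $\M_2$); thus $\{p\}$ is not a separator of either part and the $2$-sum is well defined. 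I would then verify $\M = \M_1 \oplus_2 \M_2$ by checking that the circuit family of $P(\M_1,\M_2) \setminus p$ given by Oxley's Proposition 7.1.20 coincides with $\mathcal{C}(\M)$, the decisive point being that a circuit of $\M$ meeting both $X$ and $Y$ breaks, across the rank-one connection, into a circuit of $\M_1$ through $p$ and a circuit of $\M_2$ through $p$.

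Two bookkeeping claims close the forward direction. Since $|X|,|Y| \geq 2$, the parts have $|X|+1 \geq 3$ and $|Y|+1 \geq 3$ elements. To see that $\M_1$ is a proper minor, I would exhibit it: choosing a basis $B_Y$ of $\M|Y$ together with an element $y \in B_Y$ so that $B_Y - y$ is skew to $X$ (possible exactly because $\lambda(X)=1$), the minor $\M \setminus (Y - B_Y) / (B_Y - y)$ has ground set $X \cup \{y\}$, and exactness forces $y$ to attach to $X$ just as $p$ does, giving $\M_1 \cong \M \setminus (Y - B_Y) / (B_Y - y)$; as $|Y| \geq 2$ at least one element is deleted or contracted, so the minor is proper, and symmetrically for $\M_2$.

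For the converse I would run the computation in reverse. Writing $X = E(\M_1) - p$ and $Y = E(\M_2) - p$, the hypothesis $|E(\M_i)| \geq 3$ gives $|X|,|Y| \geq 2$. Using $P(\M_1,\M_2)|E(\M_i) = \M_i$ together with the fact that $p$ is not a coloop of $\M_i$, one obtains $r_\M(X) = r(\M_1)$ and $r_\M(Y) = r(\M_2)$, while the $2$-sum rank identity gives $r(\M) = r(\M_1) + r(\M_2) - 1$; hence $\lambda(X) = r(\M_1) + r(\M_2) - r(\M) = 1$, so $(X,Y)$ is a $2$-separation and $\M$ is not $3$-connected (its $2$-connectedness being the standing hypothesis, which in any case forces each $\M_i$ to be connected). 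The main obstacle is the forward construction: confirming that the rank data above genuinely define matroids whose $2$-sum is $\M$, and pinning down the minor representation, both of which reduce to extracting the single ``guts'' element of the exact $2$-separation from the submodular inequalities — which is exactly where $\lambda(X)=1$ does all the work.
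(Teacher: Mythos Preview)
The paper does not prove this statement at all: it is quoted as Theorem~8.3.1 of Oxley and used as a black box in the proof of Theorem~\ref{thm:sparse_not_Rayleigh}. There is therefore no ``paper's own proof'' to compare against. Your sketch is the standard textbook route (exact $2$-separation $\leftrightarrow$ $2$-sum decomposition, with the parts recovered as proper minors by contracting down to a single guts element), and as an outline it is sound; but for the purposes of this paper a citation to Oxley is all that is required.
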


We now prove the following result about sparse paving matroids,

\begin{theorem}\label{thm:sparse_not_Rayleigh}
The class of sparse paving matroids is not Rayleigh.
\end{theorem}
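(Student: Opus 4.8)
The plan is to argue by contradiction. Suppose the class of sparse paving matroids were Rayleigh, so that every sparse paving matroid has a Rayleigh basis-generating polynomial. I would then produce two sparse paving matroids $\M_{1}$ and $\M_{2}$ whose $2$-sum $\M_{1}\oplus_{2}\M_{2}$ is not Rayleigh -- in fact I would arrange for it already to violate negative correlation on a single pair of elements, hence to fail even to be balanced -- and invoke the fact, from the literature on $2$-sums of Rayleigh polynomials, that the class of Rayleigh matroids is closed under $2$-sums. Under the hypothesis, $\M_{1}$ and $\M_{2}$ are both Rayleigh, so $\M_{1}\oplus_{2}\M_{2}$ would be Rayleigh too, contradicting the explicit computation; hence at least one of $\M_{1},\M_{2}$ is a sparse paving matroid that is not Rayleigh, which is the assertion. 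If $\M_{1}\oplus_{2}\M_{2}$ happened to be disconnected this changes nothing, since the Rayleigh class is also closed under direct sums.

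For the set-up I would assemble the ingredients already available: Rayleigh implies balanced, so a $2$-sum that fails negative correlation cannot be Rayleigh; the half-plane property (equivalently, being strongly Rayleigh) is preserved under $2$-sums, so $\M_{1}$ and $\M_{2}$ cannot both be taken with the half-plane property -- in particular not uniform -- or the $2$-sum would automatically be Rayleigh; and sparse paving matroids are balanced by Lemma~\ref{lem:Lem_jerrum}, which is exactly what lets the two parts individually satisfy negative correlation while their gluing does not. I would use the circuit description of $2$-sums recalled in the preliminaries: writing $g_{i}$ for the basis-generating polynomial of $\M_{i}$ with basepoint variable $y$, the $2$-sum has basis-generating polynomial $g=(\partial_{y}g_{1})(g_{2}|_{y=0})+(g_{1}|_{y=0})(\partial_{y}g_{2})$, so that $\mathbb{P}(e)$, $\mathbb{P}(f)$ and $\mathbb{P}(ef)$, for $e$ on the $\M_{1}$-side and $f$ on the $\M_{2}$-side, are read off by differentiating $g$ and evaluating at $\mathbf{1}$.

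The substantive part -- and the step I expect to be the main obstacle -- is the explicit choice of $\M_{1}$ and $\M_{2}$ and the verification that $\mathbb{P}(ef)>\mathbb{P}(e)\mathbb{P}(f)$ for a cross-pair straddling the basepoint. The matroids must be small enough for the basis enumeration to be tractable, sparse paving (so that circuit-hyperplanes can be placed to tune the counts), and not half-plane; and the circuit-hyperplanes must be arranged so that identifying the basepoints turns a negatively correlated situation on each side into a positively correlated pair in the $2$-sum. This is where the structure theory of $3$-connected matroids and of $2$-sums enters: one uses the $2$-sum decomposition (as in the quoted theorem that a $2$-connected non-$3$-connected matroid is a $2$-sum of proper minors) to know precisely which subsets become bases after gluing, and a finite case analysis to push the inequality the wrong way. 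Once a valid pair is in hand the conclusion follows at once, and the only remaining bookkeeping -- disconnectedness of the $2$-sum, and checking that the chosen $\M_{1},\M_{2}$ really are sparse paving -- is routine.
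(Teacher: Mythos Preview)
Your strategy has a structural obstruction precisely at the step you flag as the main obstacle, and it is not merely that the explicit pair is hard to find: the mechanism you propose cannot work. For a cross-pair $e\in E(\M_{1})\setminus\{p\}$, $f\in E(\M_{2})\setminus\{p\}$, your own formula for the basis-generating polynomial of the $2$-sum gives, at $\mathbf{1}$,
\[
N_{e}N_{f}-N\cdot N_{ef}\;=\;\bigl(\alpha_{1}^{e}\beta_{1}-\alpha_{1}\beta_{1}^{e}\bigr)\bigl(\alpha_{2}^{f}\beta_{2}-\alpha_{2}\beta_{2}^{f}\bigr),
\]
where $\alpha_{i},\beta_{i}$ (and their superscripted variants) count bases of $\M_{i}$ containing or avoiding $p$ (and, additionally, $e$ or $f$). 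Each factor is $\le 0$ exactly when the corresponding element is negatively correlated with $p$ in its part. Since sparse paving matroids are balanced by Lemma~\ref{lem:Lem_jerrum}, both factors are $\le 0$, so the product is $\ge 0$: a cross-pair in the $2$-sum of two sparse paving matroids is \emph{automatically} negatively correlated. The lemma you invoke as enabling your plan is what kills it. Any repair would need a same-side pair or non-uniform weights, and at that point you are effectively probing one of the $\M_{i}$ directly for a Rayleigh failure --- the $2$-sum detour no longer buys anything over simply exhibiting a non-Rayleigh sparse paving matroid outright.

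The paper's proof is of an entirely different, non-constructive nature. It is an asymptotic counting argument: assuming every sparse paving matroid were Rayleigh, the log-scale dominance of sparse paving matroids among all matroids would force the Rayleigh class to dominate on log scale as well; closure of the Rayleigh class under $2$-sums is then played off against Theorem~\ref{thm:3-connec_almost} (asymptotically almost all matroids are $3$-connected, hence not nontrivial $2$-sums) to derive a contradiction at the level of growth rates. No individual matroid is ever written down.
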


\begin{proof}
We proceed by contradiction and base our arguments on the dominating classes in the enumeration of matroids. We assume that sparse paving matroids are Rayleigh, and let $m_{n}$ denote the number of matroids on $n$ elements, $r_{n}$ denote the number of Rayleigh matroids on $n$ elements, and $s_{n}$ denote the number of sparse paving matroids on n elements. Then by our assumption $s_{n} < r_{n}$. Also, since the class of Rayleigh matroids is closed under $2$-sums, the number of Rayleigh matroids that are obtained as a $2-$sum, which we denote by $r^{\text{2-sum}}_{n}$, provides a dominating class of matroids on log scale because by the work in \cite{pendavingh2015number} we know that sparse paving matroids dominate any enumeration of matroids on log-scale. But by Theorem \ref{thm:3-connec_almost} we know that even on log scale, almost all matroids are 3-connected, which would imply that they would not be obtained via a 2-sum. This gives us a contradiction. Hence, the proof.
\end{proof}

\begin{remark}
We do want to point the reader to one exercise sheet of a Nordfjordeid Summer School 2018 by June Huh \cite{Huh2018}, where June Huh comments about the Master's thesis of Alejandro Erickson \cite[Theorem 4.2.1]{alejandro} wherein it is proven that sparse paving matroids are Rayleigh. The proof involves a stronger assertion that the coefficients in the Rayleigh difference of a sparse paving matroid are also positive. However, it is mentioned in \cite[Problem 29]{Huh2018} that Benjamin Schroeter proposed a counter-example to this assertion in the form of the graphic matroid on the complete graph $K_{4}$ and our proof provides a solution to Problem 30 in \cite{Huh2018}.  
\end{remark}

\section{Conclusions and Future Work}

Our results in this article help us understand the half-plane property, especially when considered over the class of transversal matroids \cite[Question 13.17]{choe2004homogeneous}. Some previously known classes of matroids that do satisfy the half-plane property are regular matroids, uniform matroids, sixth roots of unity matroids, and all matroids of rank two. Recently, in \cite[Theorem 5.2]{kummer2023}, the authors conducted a census on all matroids on at most eight elements to list matroids that do not satisfy the half-plane property and they provide a list of  22 sparse paving matroids of rank four on eight elements that do not satisfy the half-plane property. The authors use a criterion of checking the half-plane property stated in \cite[Theorem 3]{wagner2009criterion} which states that positive Rayleigh difference for a single pair of elements $(i,j)$ is sufficient enough to check for half-plane property and the authors in \cite{kummer2023} develop an algorithm based on this to verify half-plane property of matroids. This is a complementary approach to our results and can also be viable for checking the half-plane property for other classes of matroids.

Another aspect of our work is to understand the $c$-Rayleigh property for the basis generating polynomials of the class of matroids, especially for the class of transversal matroids. A polynomial $f \in \mathcal{P}_{n}$ is called $c$-Rayleigh if 

\[ \frac{ \partial f} {\partial z_{i}} (x) \frac{ \partial f} {\partial z_{j}} (x) \geq c \frac{ \partial^{2} f} {\partial z_{i} \partial z_{j}} (x) f(x)  \] 

for all $x = (x_{1}, \hdots , x_{n}) \in \mathbb{R}^{n}_{+}$ and $1 \leq i, j \leq n$, where $\mathbb{R}_{+} = (0, \infty)$.

\begin{figure}
    \centering
    \includegraphics[scale=0.45]{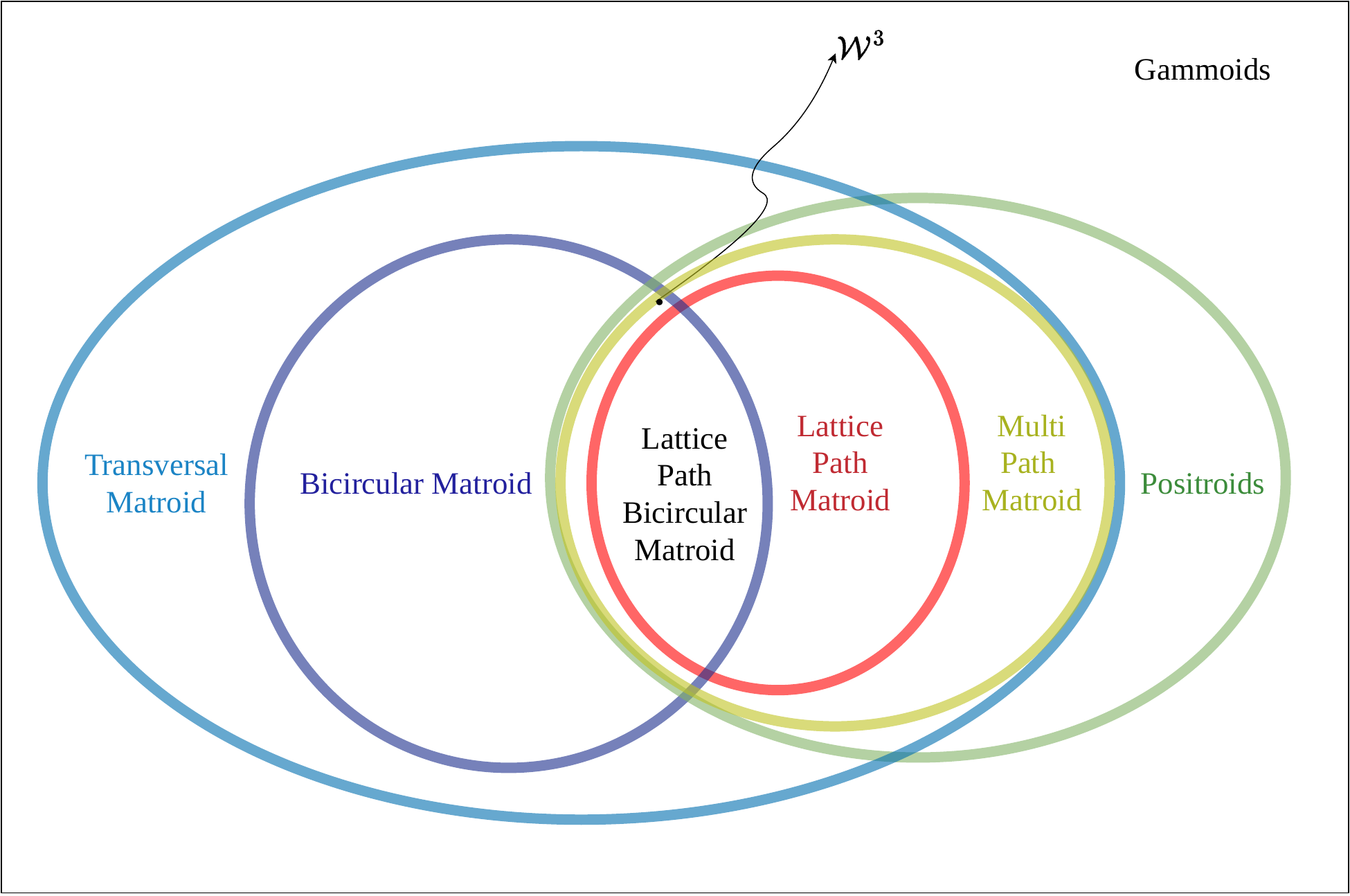}
    \caption{A Venn diagram depicting how the classes of transversal matroids, positroids, bicircular matroids, lattice path matroids, multi path matroids, lattice path bicircular matroids intersect in the superclass of gammoids.}
    \label{fig:venn_diag_matroid}
\end{figure}

In the seminal work of Branden and Huh \cite{branden2020lorentzian} the authors proposed the following conjecture concerning $c-$ Rayleigh polynomials,

\begin{conjecture}[Conjecture 3.12 \cite{branden2020lorentzian}]\label{conj:lorent_conj}
The following conditions are equivalent for any non-empty $J \subseteq \{0, 1\}^{n}$ :

\begin{enumerate}
    \item $J$ is the set of bases of a matroid on $[n]$.
    \item The generating function $f_{J}$ is a homogeneous $\frac{8}{7}$ -Rayleigh polynomial.
\end{enumerate}
\end{conjecture}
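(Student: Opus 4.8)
The plan is to pass through the \emph{Lorentzian} reformulation. By the theory of \cite{branden2020lorentzian}, a multiaffine homogeneous polynomial of degree $k$ with nonnegative coefficients is Lorentzian if and only if its support is the set of bases of a rank-$k$ matroid; hence condition (i) is equivalent to ``$f_J$ is Lorentzian.'' Conjecture~\ref{conj:lorent_conj} therefore asserts that, within the class of multiaffine homogeneous polynomials with nonnegative coefficients, being Lorentzian and being $\tfrac{8}{7}$-Rayleigh coincide, and I would prove the two implications separately, since they are of very different character.

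The implication ``$\tfrac{8}{7}$-Rayleigh $\Rightarrow$ (i)'' should be the more accessible one; in fact I expect that $c$-Rayleighness for \emph{any} finite $c$ already forces (i). Since $f_J$ is homogeneous of some degree $k$, every member of $J$ has size $k$, so only the basis-exchange axiom for $J$ remains to be checked. I would argue by contraposition via a degeneration: if exchange fails, pick $S,T\in J$ and $i\in S\setminus T$ witnessing the failure, choose a suitable $j$, and move along a curve $z(t)$ in $\mathbb{R}^n_{+}$ (say $t\to 0$), built from this witness by sending the coordinates outside $S\cup T$ to $0$ and scaling the rest, along which the Rayleigh quotient $\partial_i\partial_j f_J\cdot f_J/(\partial_i f_J\cdot\partial_j f_J)$ diverges; then $f_J$ is $c$-Rayleigh for no finite $c$. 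The existence of such a curve is the combinatorial heart of this direction, and is precisely the degeneration that underlies the matroid-support half of the Lorentzian dictionary of \cite{branden2020lorentzian}, so I expect it to go through; the only delicate point is the correct choice of the pair $(i,j)$ and of the scaling exponents.

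The implication ``(i) $\Rightarrow$ $\tfrac{8}{7}$-Rayleigh'' carries the real difficulty, and it separates into a sharpness part and an inequality part. For sharpness one must exhibit a single matroid $\M_0$, a pair $i,j$, and a point $x\in\mathbb{R}^n_{+}$ at which the Rayleigh quotient attains, or tends to, $\tfrac{8}{7}$; the natural candidates are $\M(K_4)$ — which already fails ordinary Rayleighness, by the counterexample of Schroeter recorded in \cite{Huh2018} — and the $22$ rank-four matroids on eight elements that \cite{kummer2023} lists as failing the half-plane property. Writing the basis polynomial in the minor decomposition
\[ f_{\M_0}=z_iz_j\,f_{\M_0/i/j}+z_i\,f_{(\M_0\setminus j)/i}+z_j\,f_{(\M_0\setminus i)/j}+f_{\M_0\setminus i\setminus j} \]
makes the Rayleigh difference, and the tightness of $\tfrac{8}{7}$, a finite optimisation. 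The hard part is to prove, for every matroid $\M$, every $i\ne j$, and every $x\in\mathbb{R}^n_{+}$, the inequality
\[ \tfrac{8}{7}\,\partial_i f_{\M}(x)\,\partial_j f_{\M}(x)\ \ge\ \partial_i\partial_j f_{\M}(x)\,f_{\M}(x), \]
which is precisely the open content of Conjecture~\ref{conj:lorent_conj}.

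For this inequality I see two routes. The first is induction on $|E(\M)|$: writing $A=f_{\M/i/j}$, $B=f_{(\M\setminus j)/i}$, $C=f_{(\M\setminus i)/j}$, $D=f_{\M\setminus i\setminus j}$, the quantity $\tfrac{8}{7}\,\partial_i f_{\M}\,\partial_j f_{\M}-\partial_i\partial_j f_{\M}\cdot f_{\M}$ becomes $\tfrac{8}{7}(z_jA+B)(z_iA+C)-A(z_iz_jA+z_iB+z_jC+D)$, and beyond the inductive $\tfrac{8}{7}$-bounds on the four minors one needs an additional inequality relating $A,B,C,D$ that is forced by the matroid axioms — an interlacing- or generalized-Rayleigh-type relation, the analogue of what lets regular and half-plane-property matroids satisfy the stronger bound with constant $1$. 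The second route reads the constant off the Lorentzian property itself: Lorentzianness says the Hessian of every order-$(k-2)$ partial derivative of $f_{\M}$ has exactly one positive eigenvalue, and restricting the associated quadratic form to the plane spanned by $e_i$ and $e_j$ and optimising the eigenvalue gap over the Lorentzian cone should produce a universal constant, the difficulty being to show that this constant is exactly $\tfrac{8}{7}$ and no larger. I do not expect either route to settle the conjecture unconditionally — it is open for good reason — but a realistic intermediate target, in the spirit of this paper, is to verify the $\tfrac{8}{7}$-bound for all transversal matroids (it holds with constant $1$ for nice transversal matroids \cite{choe2004homogeneous} and, by Theorem~\ref{thm:LPM_not_HPP}, for lattice path matroids, while bicircular matroids furnish small non-HPP test cases), or to pin down exactly which matroids have Rayleigh constant exceeding $1$ and to bound it above by $\tfrac{8}{7}$ on that class. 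The inequality direction with the sharp constant is the main obstacle.
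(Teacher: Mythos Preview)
The statement you are attempting to prove is Conjecture~\ref{conj:lorent_conj}, which the paper records as an \emph{open conjecture} from \cite{branden2020lorentzian}; the paper does not prove it, and there is therefore no proof in the paper to compare your proposal against. Your write-up is not a proof either, and you say so yourself: the implication (i) $\Rightarrow$ (ii) with the sharp constant $\tfrac{8}{7}$ is precisely the open content of the conjecture, and neither of your two suggested routes (the minor-induction on $A,B,C,D$, or reading the constant off the Lorentzian Hessian signature) comes with the missing ingredient that would close the argument. What you have produced is a reasonable survey of where the difficulty lies, together with the (plausible and likely correct) observation that the reverse implication (ii) $\Rightarrow$ (i) should follow from a degeneration argument showing that failure of basis exchange makes the Rayleigh quotient unbounded; but this is a research outline, not a proof, and should not be presented as one.
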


The problem to verify this conjecture on transversal matroids was suggested in \cite[Problem 30]{tewari2023positroids}. Our results here provide new perspectives in the context of this problem. Theorem \ref{thm:sparse_not_Rayleigh} suggests that the class of Rayleigh matroids might not necessarily dominate an enumeration of all matroids although the framing of Conjecture \ref{conj:lorent_conj} and the fact that only a few explicit examples of $c-$Rayleigh matroids are known for $c > 1$ may suggest otherwise. This encourages the approach to look out for classes of matroids which are $c-$Rayleigh,  $c > 1$.

We know that the class of transversal matroids is not minor closed, although the class of Rayleigh matroids is minor-closed \cite{choe2006rayleigh}, and therefore one problem to pursue in the future could be to verify whether almost all transversal matroids are Rayleigh i.e., $1-$Rayleigh, whereas till now there are only few known explicit examples of transversal matroids which are $c-$Rayleigh, where $1 < c \leq \frac{8}{7}$, for instance, \cite[Proposition 5.9]{choe2006rayleigh}.

Figure \ref{fig:venn_diag_matroid} illustrates various classes of matroids namely transversal matroids, positroids, bicircular matroids, lattice path matroids, multi-path matroids, lattice path bicircular matroids and how they intersect in the superclass of gammoids by a Venn diagram. We point out that positroids, lattice path matroids, multi-path matroids, and lattice path bicircular matroids are all Rayleigh. A problem to possibly work in this context can be the following,

\begin{problem}\label{prob:problem1}
Verify whether the class of bicircular matroids is Rayleigh. Do they also satisfy the half-plane property?    
\end{problem}

With previously known results concerning matroids that satisfy the half-plane property, for a possible counterexample to Problem \ref{prob:problem1} one could begin with finding those bicircular matroids that are not graphic and lattice path matroids since such bicircular matroids would satisfy the half-plane property. We take this opportunity to show that although the half-plane property is conserved under most matroidal operations, finding explicit examples of matroids that do or do not satisfy the half-plane property can be tricky. 

\begin{example}
We consider the example of the matroid $\M$ of rank three on nine elements described in Figure \ref{fig:tic-tac-toe} which is obtained via relaxations of the non-Pappus matroid, and is also referred to as the \emph{tic-tac-toe matroid} in \cite{chun2016bicircular} (the usual \emph{tic-tac-toe} matroid is defined on a $[3] \times [3]$ grid). We know that the non-Pappus matroid does not satisfy the half-plane property \cite{kummer2023}. But $\M = \B^{*}(K_{3,3})$, i.e., $\M$ is the dual matroid to the bicircular matroid on the complete bipartite graph $K_{3,3}$ and by Matthews characterization of graphs for which bicircular matoroids are graphic \cite{matthews}, we conclude that $\B(K_{3,3}) = M(K_{3,3})$, where $M(K_{3,3})$ represents the graphical matroid on $K_{3,3}$. Therefore, $\B(K_{3,3})$ satisfies the half-plane property and since the half-plane property is conserved under duality \cite{choe2004homogeneous}, $\M = \B^{*}(K_{3,3})$ also satisfies the half-plane property. Hence, using relaxations, we obtain a matroid that satisfies the half-plane property from a matroid that does not satisfy the half-plane property. In the other direction, it is noted in \cite{choe2004homogeneous}, the matroid $F^{-3}_{7}$ does not satisfy the half-plane property although it is a relaxation of the matroid $P_{7}$, which is a sixth root of unity matroid and hence satisfies the half-plane property. 

\begin{figure}
    \centering
    \includegraphics[scale=0.48]{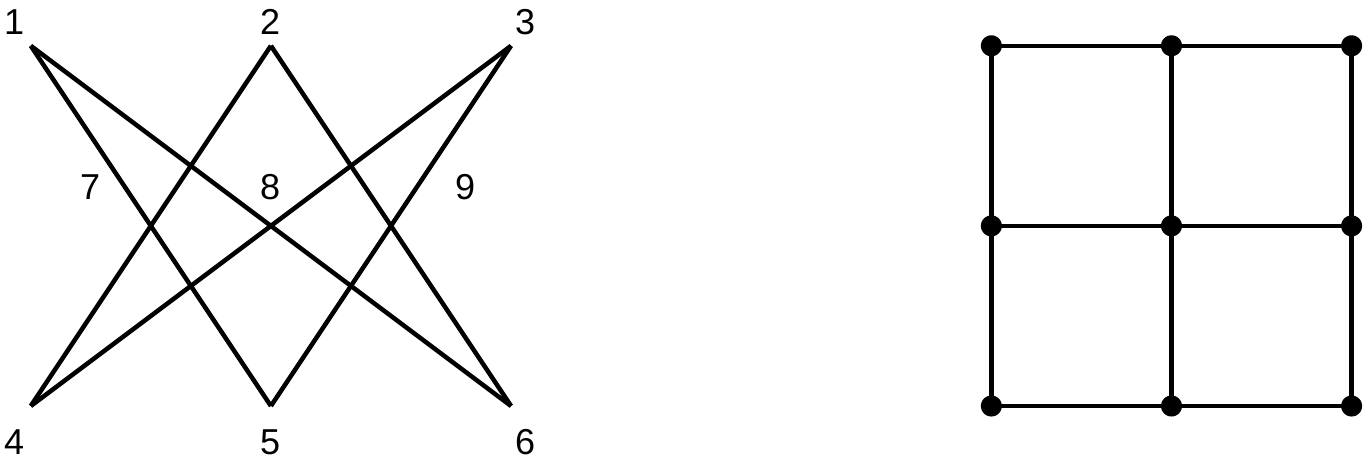}
    \caption{The tic-tac-toe matroid on a $[2] \times [2]$ grid which is obtained as relaxation of \emph{non-Pappus} matroid.}
    \label{fig:tic-tac-toe}
\end{figure}
\end{example}

Another conjecture first posed in \cite{choe2004homogeneous} which is still open and intricately related to our results is the following 

\begin{conjecture}[Conjecture 13.16 \cite{choe2004homogeneous}]\label{conj:trans_three}
All rank-3 transversal matroids have the half-plane property.    
\end{conjecture}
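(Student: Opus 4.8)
\emph{Proof proposal.}\ The plan is to strip a rank-$3$ transversal matroid $\M$ down, through operations that preserve the half-plane property, to a single recalcitrant family, and then settle that family by an explicit Rayleigh-difference computation. The driving tool will be Wagner's recursive criterion \cite[Theorem~3]{wagner2009criterion}: for a pair $e\neq f$, writing $f_{\M}=f_{\M\setminus e\setminus f}+x_e\,f_{\M/e\setminus f}+x_f\,f_{\M\setminus e/f}+x_e x_f\,g$, the matroid $\M$ has the half-plane property once the four coefficient matroids do and the Rayleigh difference $\Delta_{ef}(f_{\M})=f_{\M/e\setminus f}\,f_{\M\setminus e/f}-g\,f_{\M\setminus e\setminus f}$ is nonnegative on the positive orthant. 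I would induct on $|E(\M)|$. Loops are discarded. If $e\parallel e'$ is a parallel pair, then $f_{\M}=\alpha+(x_e+x_{e'})\beta$ with $\alpha,\beta$ free of $x_e,x_{e'}$, so $\Delta_{ee'}(f_{\M})=\beta^2\ge 0$ identically, while the four $\{e,e'\}$-coefficient matroids are transversal of rank $\le 3$ on fewer elements; hence we may assume $\M$ is simple.

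For a simple rank-$3$ transversal matroid I would exploit its rigidity. Fix a presentation with exactly three sets $A_1,A_2,A_3$ and give each $e\in E$ the type $T(e)=\{\,i:e\in A_i\,\}$; Hall's theorem shows that two elements of equal singleton type are parallel, so in the simple case there is at most one element of each of the types $\{1\},\{2\},\{3\}$. Reading off the independent $3$-sets, $\M$ is then a ``triangle configuration'': at most three long lines $\ell_{12},\ell_{13},\ell_{23}$ (carrying the elements of types $\{i\}$, $\{j\}$, $\{i,j\}$) meeting pairwise in at most one element, the singleton-type ``corners'', together with ``generic'' elements of type $\{1,2,3\}$ that lie on none of the three lines, and
\[
 f_{\M} \;=\; e_3(x_E)-e_3(x_{\ell_{12}})-e_3(x_{\ell_{13}})-e_3(x_{\ell_{23}}),
\]
with $e_3$ the third elementary symmetric polynomial in the displayed variables and a term dropped whenever its line has fewer than three elements.

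Next I would peel off everything but the triangle. If $\M$ has a generic element $g$ that is not a coloop, then (by Hall, using simplicity) every $3$-set containing $g$ is a basis, so $\M$ is the free extension of $\M\setminus g$; free extension preserves the half-plane property \cite[Section~4]{choe2004homogeneous} and $\M\setminus g$ is again transversal of rank $3$, so induction applies, while if $g$ is a coloop then $f_{\M}=x_g f_{\M\setminus g}$ with $\M\setminus g$ of rank $2$ and there is nothing to prove. Having removed all generic elements, if $\M$ now has at most two long lines it is uniform, or a parallel connection of two rank-$2$ uniform matroids (two long lines sharing a corner), or a $2$-sum of two rank-$2$ uniform matroids (two long lines with no common element), all of which have the half-plane property \cite[Section~4]{choe2004homogeneous}. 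Thus we are reduced to $\M$ having exactly three long lines, of arbitrary lengths, with any subset of the three corners present.

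For these triangle matroids, $f_{\M}$ is symmetric within each of the three blocks of non-corner line-points and, because the degree-$3$ elementary symmetric polynomial of each block cancels, has degree at most two in each block. By Wagner's criterion it is enough to check $\Delta_{ij}(f_{\M})\ge 0$ on the positive orthant for one representative pair $(i,j)$ of each combinatorial type — both on a single line, on two different lines, a corner with a line-point, two corners — of which there are finitely many; and, using the block symmetry of $\Delta_{ij}(f_{\M})$, each such inequality reduces, after polarising the symmetric blocks down to at most two variables apiece, to a semialgebraic inequality in boundedly many variables with the line-lengths as parameters, for which one seeks a sum-of-squares-with-nonnegative-coefficients certificate valid uniformly in those lengths. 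The hard part is exactly this last verification: the polynomials are fully explicit, but extracting a certificate uniform in the line-lengths is delicate — even the smallest triangle matroid, the rank-three whirl $\mathcal{W}^3$, makes the computation error-prone — and until it is done carefully it remains a priori possible (though, given the conjecture's standing, not expected) that some triangle matroid fails; a lesser obstacle is keeping the inductions of the first and third steps honest when corners are absent and in the small base cases.
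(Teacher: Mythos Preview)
The statement you are attempting to prove is recorded in the paper as an \emph{open conjecture}, not a theorem: the surrounding text explicitly calls it ``still open'' and the paper's only contribution toward it is the observation that two of the eight rank-$3$ classes from \cite[Section~10.4]{choe2004homogeneous} --- the uniform matroids and the family $L_{n_1:n'}$ --- are now covered because the latter are lattice path matroids. There is therefore no proof in the paper to compare your proposal against.

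As for the proposal itself, you are candid that the final step --- a uniform SOS-type certificate for the Rayleigh differences of the triangle matroids in arbitrary line-lengths --- has not been carried out, so the argument is at best an outline. But there is also a genuine gap \emph{before} that point. You assert that free extension preserves the half-plane property, citing \cite[Section~4]{choe2004homogeneous}; what is actually proved there (Proposition~4.12) is that a \emph{nice} principal extension preserves it, and free extension of $\M\setminus g$ is nice only when one can choose nonnegative weights $\lambda_f$ with $\sum_{f\notin\mathrm{cl}(B')}\lambda_f$ constant over all rank-$2$ independent sets $B'$. In your triangle configurations the hyperplanes (the closures $\mathrm{cl}(B')$) have genuinely different sizes --- the three long lines have at least three points each, while a pair of non-corner points taken from two different lines spans a two-point line --- and a short linear-algebra check shows no nonnegative weighting can equalise them. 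So the ``peel off a generic point'' reduction is unjustified as stated; you would need a direct Rayleigh-difference argument for the pair $(g,f)$ instead, and that is not obviously easier than the final computation you already flag.

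A smaller but real slip: in your statement of Wagner's criterion you require $\Delta_{ef}(f_{\M})\ge 0$ only on the positive orthant. That yields only the Rayleigh property; for the half-plane property one needs nonnegativity on all of $\mathbb{R}^n$ (equivalently, the strongly Rayleigh condition), as in \cite[Theorem~5.6]{branden2007polynomials} and \cite[Theorem~3]{wagner2009criterion}. This matters precisely at your final verification step, where restricting to positive variables would not suffice.
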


In \cite[Section 10.4]{choe2004homogeneous}, the authors provide a characterization of rank three transversal matroids in 8 separate classes. Out of these 8 classes, one is of uniform matroids which we know satisfy the half-plane property. Another class is $L_{n_{1}:n'}, n_{1} \geq 3, n' \geq 1$, consisting of one $n_{1}$-point line together with $n'$ freely added points. On comparing this family of transversal matroids with the list of excluded minors for the class of lattice path matroids \cite[Theorem 3.1]{bonin2010lattice}, we notice that this family of matroids is also a subclass of lattice path matroids and hence they satisfy the half-plane property. The remaining six classes of rank three transversal matroids still need to be checked whether they satisfy the half-plane property and we wish to pursue the complete verification of rank three transversal matroids in future work. 

Since we know that lattice path matroids are also positroids, we can also ask a similar question as Conjecture \ref{conj:trans_three} for the class of positroids as also stated in \cite[Problem 32]{tewari2023positroids}

\begin{problem}\label{prob:problem2}
Verify whether all rank three positroids satisfy the half-plane property. Additionally, do all positroids satisfy the half-plane property?
\end{problem}

\bibliographystyle{siam}

\bibliography{biblio.bib}

\end{document}